\DeclareMathOperator{\Lie}{Lie}
\DeclareMathOperator{\ad}{ad}
\DeclareMathOperator{\Id}{Id}
\DeclareMathOperator{\diag}{diag}
\DeclareMathOperator{\Ad}{Ad}
\DeclareMathOperator{\Lin}{Lin}
\DeclareMathOperator{\Aut}{Aut}
\DeclareMathOperator{\Isom}{Isom}
\renewenvironment{proof}[1][Proof]{\textbf{#1.} }
{\ \rule{0.5em}{0.5em}}
\newtheorem{theorem}{Theorem}
\newtheorem{prop}{Proposition}
\newtheorem{lemma}{Lemma}
\theoremstyle{definition}
\newtheorem{definition}{Definition}
\newtheorem{remark}{Remark}
\begin{document}

\title
[On geodesic orbit nilmanifolds] {On geodesic orbit nilmanifolds}

\author{Yu.G.~Nikonorov}

\address{Southern Mathematical Institute of \newline
the Vladikavkaz Scientific Center of \newline
the Russian Academy of Sciences, \newline
Vladikavkaz, Vatutina st., 53, \newline
362027, Russia}
\email{nikonorov2006@mail.ru}

\begin{abstract}
The paper is devoted to the study of geodesic orbit Riemannian metrics on nilpotent Lie groups.
The main result is the construction of continuous families of pairwise non-isomorphic connected and simply connected
nilpotent Lie groups, every of which admits  geodesic orbit metrics. The minimum dimension of groups in the constructed families is $10$.

\vspace{2mm} \noindent 2020 Mathematical Subject Classification:
53C20, 53C25, 53C30, 17B30, 22E25.

\vspace{2mm} \noindent Key words and phrases: homogeneous Riemannian manifolds, geodesic orbit
spaces, naturally reductive spaces, nilmanifolds, nilpotent Lie groups, two-step nilpotent Lie algebra.
\end{abstract}

\maketitle

\section{Introduction and the main results}\label{sec_1}

A Riemannian manifold $(M,g)$ is called {\it a  manifold with
homogeneous geodesics or a geodesic orbit manifold} (shortly,  {\it GO-manifold}) if any
geodesic $\gamma $ of $M$ is an orbit of a 1-parameter subgroup of
the full isometry group of $(M,g)$. A Riemannian manifold $(M=G/H,g)$, where $H$ is a compact subgroup
of a Lie group $G$ and $g$ is a $G$-invariant Riemannian metric,
is called {\it a space with homogeneous geodesics} or {\it a geodesic orbit space}
(shortly,  {\it GO-space}) if any geodesic $\gamma $ of $M$ is an orbit of a
1-parameter subgroup of the group $G$.
Hence, a Riemannian manifold $(M,g)$ is  a geodesic orbit Riemannian manifold,
if it is a geodesic orbit space with respect to its full connected isometry group. This terminology was introduced in
\cite{KV} by O.~Kowalski and L.~Vanhecke, who initiated a systematic study on such spaces.
In the same paper, O.~Kowalski and L.~Vanhecke classified all GO-spaces
of dimension $\leq 6$. One can find many interesting results  about  GO-manifolds
and its subclasses in \cite{AA, AN, AV, BerNik08, BerNik09, CN2019, DuKoNi, Gor96, Nsp, S18, S20, Tam98, Tam99, Yakimova}, and in the references
therein.
\smallskip

It is clear that any geodesic orbit space is homogeneous.
All homogeneous spaces in this paper are assumed to be almost effective.
Let $(G/H, g)$ be a homogeneous Riemannian space. It is well known that there is an $\Ad(H)$-invariant decomposition (that is not unique in general)
\begin{equation}\label{reductivedecomposition}
\mathfrak{g}=\mathfrak{h}\oplus \mathfrak{p},
\end{equation}
where $\mathfrak{g}={\rm Lie }(G)$ and $\mathfrak{h}={\rm Lie}(H)$.
The Riemannian metric $g$ is $G$-invariant and is determined
by an $\Ad(H)$-invariant Euclidean metric $g = (\cdot,\cdot)$ on
the space $\mathfrak{p}$ which is identified with the tangent
space $T_oM$ at the initial point $o = eH$. By $[\cdot, \cdot]$ we denote the Lie bracket in $\mathfrak{g}$, and by
$[\cdot, \cdot]_{\mathfrak{p}}$ its $\mathfrak{p}$-component according to (\ref{reductivedecomposition}). The following is
(in the above terms) a well-known criteria of GO-spaces, see other details and useful facts in \cite{BerNik20, Nik2017}.

\begin{lemma}[\cite{KV}]\label{GO-criterion}
A homogeneous Riemannian space   $(G/H,g)$ with the reductive
decomposition  {\rm(\ref{reductivedecomposition})} is a GO-space if and
only if  for any $X \in \mathfrak{p}$ there is $Z \in \mathfrak{h}$ such that
$$
([X+Z,Y]_{\mathfrak{p}},X) =0  \text{ for all } Y\in \mathfrak{p}.
$$
\end{lemma}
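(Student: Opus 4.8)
The plan is to reduce everything to one auxiliary fact, often called the Geodesic Lemma: for $W\in\mathfrak g$ the orbit curve $\gamma_W(t):=\exp(tW)\cdot o$ (here $o=eH$) is a geodesic of $(G/H,g)$ if and only if $([W,Y]_{\mathfrak p},W_{\mathfrak p})=0$ for all $Y\in\mathfrak p$, where $W_{\mathfrak p}$ denotes the $\mathfrak p$-component of $W$ in (\ref{reductivedecomposition}). Granting this, the lemma follows quickly. By homogeneity of $(M,g)$ it suffices to check that every geodesic issuing from $o$ is an orbit of a one-parameter subgroup of $G$; such a geodesic is determined by its initial velocity, which we identify with some $X\in\mathfrak p\cong T_oM$. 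Now $\gamma_W$ starts at $o$ with velocity corresponding to $W_{\mathfrak p}$, and replacing $W$ by a scalar multiple only reparametrizes $\gamma_W$ affinely; hence, by uniqueness of geodesics with prescribed initial data, the geodesic with initial data $(o,X)$ is an orbit of a one-parameter subgroup precisely when there is $W\in\mathfrak g$ with $W_{\mathfrak p}=X$ such that $\gamma_W$ is a geodesic. Writing $W=X+Z$ with $Z\in\mathfrak h$, the Geodesic Lemma rewrites this as $([X+Z,Y]_{\mathfrak p},X)=0$ for all $Y\in\mathfrak p$, which is the asserted condition (the case $X=0$ being trivial).

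To prove the Geodesic Lemma I would work with the Killing vector field $W^{*}$ on $M$ generated by $W$, whose flow is $\phi_t:=\exp(tW)$ and whose integral curve through $o$ is $\gamma_W$; in particular $\dot\gamma_W(t)=W^{*}_{\gamma_W(t)}$ and $\nabla_{\dot\gamma_W}\dot\gamma_W=(\nabla_{W^{*}}W^{*})|_{\gamma_W}$. From the Killing identity $g(\nabla_A W^{*},B)+g(\nabla_B W^{*},A)=0$, specialized to $A=W^{*}$, one obtains $g(\nabla_{W^{*}}W^{*},B)=-\tfrac12\,B\big(\|W^{*}\|^{2}\big)$ for every vector field $B$; thus $\nabla_{W^{*}}W^{*}$ is $-\tfrac12$ times the gradient of the function $f:=\|W^{*}\|^{2}$, and $\gamma_W$ is a geodesic if and only if $df$ vanishes at every point of $\gamma_W$.

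Finally one shows that $df$ vanishes along $\gamma_W$ iff $df_o=0$, and computes $df_o$. Each $\phi_t$ is an isometry with $\phi_t(o)=\gamma_W(t)$ and $(\phi_t)_{*}W^{*}=W^{*}$ (the flow of $W^{*}$ commutes with itself), so $f\circ\phi_t=f$; differentiating at $o$ and using invertibility of $(d\phi_t)_o$ gives $df_{\gamma_W(t)}=0\iff df_o=0$. For $df_o$, take $Y\in\mathfrak p$ and the curve $s\mapsto\exp(sY)\cdot o$ realizing the corresponding tangent vector; applying the isometry $\exp(-sY)$, the equivariance $h_{*}W^{*}=(\Ad(h)W)^{*}$, and the identification of the value at $o$ of the fundamental field of any $V\in\mathfrak g$ with $V_{\mathfrak p}$, one gets $f(\exp(sY)\cdot o)=\big((V(s))_{\mathfrak p},(V(s))_{\mathfrak p}\big)$ with $V(s)=\Ad(\exp(-sY))W=e^{-s\,\ad Y}W$. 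Differentiating at $s=0$, where $V'(0)=[W,Y]$, yields $df_o(Y)=2\big([W,Y]_{\mathfrak p},W_{\mathfrak p}\big)$, so $df_o=0$ iff $([W,Y]_{\mathfrak p},W_{\mathfrak p})=0$ for all $Y\in\mathfrak p$, completing the argument. The computation itself is routine; the delicate points are the reduction above — noting that rescaling $W$ is harmless, so that "being an orbit" genuinely amounts to solving $W_{\mathfrak p}=X$ together with the geodesic condition — and keeping the conventions in $h_{*}W^{*}=(\Ad(h)W)^{*}$ and in $W^{*}_o\leftrightarrow W_{\mathfrak p}$ consistent, since a sign slip there reverses the final bracket condition.
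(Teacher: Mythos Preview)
The paper does not prove this lemma; it is quoted from \cite{KV} without argument, so there is nothing to compare your approach against. Your proof is correct and follows the standard route: reduce the GO condition at $o$ to the Geodesic Lemma (the orbit $t\mapsto\exp(tW)\cdot o$ is a geodesic iff $([W,Y]_{\mathfrak p},W_{\mathfrak p})=0$ for all $Y\in\mathfrak p$), and then establish the latter via the Killing identity, which gives that $\nabla_{W^{*}}W^{*}$ equals $-\tfrac12$ times the gradient of $f=\|W^{*}\|^{2}$, together with the flow invariance $f\circ\phi_t=f$. The computation of $df_o$ through the $\Ad$-equivariance $(L_h)_{*}W^{*}=(\Ad(h)W)^{*}$ and the identification $V^{*}_o\leftrightarrow V_{\mathfrak p}$ is clean, and your signs are consistent. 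One minor remark: the comment about rescaling $W$ is not strictly needed once you note that $f\circ\phi_t=f$ already forces $\|\dot\gamma_W\|$ to be constant along $\gamma_W$, so any reparametrization of $\gamma_W$ that is a geodesic must be affine; but this does no harm.
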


It is clear that the property to be geodesic orbit is related to classes of locally isomorphic homogeneous spaces due to this lemma.

The metric $g$ is called \emph{naturally reductive} if an $\Ad(H)$-invariant complement $\mathfrak{p}$
can be chosen in such a way that $([X,Y]_{\mathfrak{p}},X) = 0$ for all $X,Y \in \mathfrak{p}$.
In this case, we say that
the (naturally reductive) metric $g$ \emph{is generated by the pair} $(\mathfrak{p}, (\cdot ,\cdot ))$.
It immediately follows that any naturally reductive space is a geodesic orbit space; the converse is false when
$\dim (M) \ge 6$ \cite{KV}. It should be noted that the property of being naturally reductive depends on the choice of the group $G$
(the choice of the presentation $M=G/H$);
both enlarging and reducing $G$ may result in gaining or losing this property, see details e.g. in \cite{Nik2017}.
Every isotropy irreducible Riemannian space is naturally reductive, and hence geodesic orbit, see e.g.~\cite{Bes}.

The class of (Riemannian) geodesic orbit spaces includes (but is not limited to) symmetric spaces, weakly symmetric spaces \cite{AV, BKV, Ngu2, W1, Yakimova, Zil96},
naturally reductive spaces \cite{AFF, DZ, Gor85, KV85, Storm19, Storm20}, normal and
generalised normal homogeneous ($\delta$-homogeneous) spaces \cite{BerNik08, BerNik12, BerNik20},
and Clifford~--~Wolf homogeneous manifolds \cite{BerNik09,BerNik20}.
For the current state of knowledge in the theory of geodesic orbit spaces and manifolds we refer the reader to the book \cite{BerNik20}, the papers
\cite{AN, Arv17, Gor96, Nik2017, Storm20, CNN2023},
and the references therein.

It should be noted that GO property is a very general geometric phenomenon: it is extensively studied in Riemannian, Lorentzian and general
pseudo-Riemannian settings (see \cite{Bar, CWZ2022, NW23, WC22}), in Finsler geometry (see recent papers \cite{Du2, XDY, YD} and the references therein),
in affine geometry \cite{Du1}, and even for finite
metric spaces \cite{BerNik19}. In all these cases, is not hard to see that the GO property implies homogeneity, but is much stronger.

\medskip

There is no hope to obtain a complete classification of all Riemannian geodesic orbit spaces. Partial classifications
are possible only for special types of geodesic orbit metrics (for instance,
Clifford~--~Wolf homogeneous metrics \cite{BerNik09}) or for small dimensions (for $\dim (M) \le 6$ see \cite{KV} and references therein).
\medskip

In this paper, we deal with Riemannian geodesic orbit metrics on nilpotent Lie groups, that can be studied using nilpotent Lie algebras
equipped  with suitable inner products.
It is known that there are finite numbers of isomorphism classes of complex or real nilpotent Lie algebra in $\dim \leq 6$.
On the other hand there are six $1$-parameter families of nilpotent Lie algebras of
dimension $7$, pairwise not isomorphic \cite{GoKham1996}.

Two-step nilpotent (metabelian) Lie algebras form the first non-trivial
subclass of nilpotent algebras. However even the classification of these special nilpotent Lie algebras
is a rather complicated problem. This problem is completely solved in the case of $1$-dimensional or $2$-dimensional center \cite{LT99}.
Known results on small-dimensional two-step nilpotent Lie algebras
(in particular, the classification of complex two-step nilpotent Lie algebra in $\dim \leq 9$) can be found in \cite{GaTi99, IKP22}.
It should be noted that there are several continuous families of pairwise non-isomorphic two-step nilpotent Lie algebras in dimension $9$.
\smallskip

The main goal of this paper is to prove that the set of nilpotent groups admitting
Riemannian geodesic orbit metrics is quite extensive.
To do this, we will construct new examples of geodesic orbit metrics.
The first main result of this paper is the following

\begin{theorem}\label{tm_1}
There is a $1$-parameter family of pairwise non-isomorphic connected and simply connected $10$-dimensional nilpotent Lie groups $N_t$ such that every of them
admits $3$-parameter family of Riemannian geodesic orbit metrics.
\end{theorem}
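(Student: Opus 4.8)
The plan is to build the family $N_t$ as two-step nilpotent Lie groups whose Lie algebras $\mathfrak{n}_t = \mathfrak{v} \oplus \mathfrak{z}$ have an abelian center $\mathfrak{z}$ and are defined by a pencil of skew-symmetric bracket maps $j_t \colon \mathfrak{z} \to \mathfrak{so}(\mathfrak{v})$ depending on a real parameter $t$. Concretely, I would fix a Euclidean space $\mathfrak{v}$ (of dimension making the total dimension $10$, so e.g.\ $\dim\mathfrak{v}=7$, $\dim\mathfrak{z}=3$, or another split summing to $10$) and a fixed inner product, then write down explicitly $j_t(Z)$ for $Z$ in a basis of $\mathfrak{z}$ as block matrices whose entries are affine in $t$. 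The bracket is $[X,Y] = \sum_k (j_t(Z_k)X, Y)\, Z_k$. The design constraint is that for generic $t$ the resulting algebras are pairwise non-isomorphic, yet each carries a rich GO structure. The natural mechanism for producing GO nilmanifolds is to arrange that the larger group $G = N_t \rtimes K$ (for a suitable compact group $K$ of automorphisms of $\mathfrak{n}_t$ acting orthogonally) makes $(N_t, g)$ a GO-space via Lemma~\ref{GO-criterion}; the three parameters in the metric family correspond to independently scaling the restriction of the inner product on invariant subspaces (e.g.\ two or three irreducible pieces of $\mathfrak{v}$ under $K$, together with the center), subject to whatever compatibility the GO condition forces.

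The key steps, in order, are: (1) write down the pencil $j_t$ explicitly and verify the Jacobi identity (automatic for two-step algebras with abelian center, since $[\mathfrak{n}_t,[\mathfrak{n}_t,\mathfrak{n}_t]] = 0$); (2) identify a compact connected group $K \subset \Aut(\mathfrak{n}_t)$ — ideally the same abstract $K$ for all $t$, acting orthogonally with respect to a fixed background inner product, and decompose $\mathfrak{v}$ and $\mathfrak{z}$ into $K$-invariant (preferably irreducible) summands; (3) for the metrics $g$ obtained by rescaling each summand by an independent positive constant, apply the GO-criterion of Lemma~\ref{GO-criterion} to the presentation $N_t = (N_t \rtimes K)/K$: for each $X \in \mathfrak{n}_t$ exhibit $Z \in \mathfrak{k} = \Lie(K)$ with $([X+Z,Y]_{\mathfrak{n}_t}, X)_g = 0$ for all $Y$; this reduces, via the standard computation for two-step nilmanifolds, to solving a linear system relating $j_t$, the scaling constants, and the isotropy action — I expect this to single out a $3$-parameter subfamily of admissible $(g$-constants$)$; (4) prove pairwise non-isomorphism of the $N_t$: compute an isomorphism invariant that varies continuously and non-trivially with $t$ — candidates are the conjugacy class (under $GL(\mathfrak{v}) \times GL(\mathfrak{z})$) of the pencil $\{j_t(Z) : Z \in \mathfrak{z}\}$, e.g.\ a cross-ratio-type invariant of the eigenvalues of $j_t(Z_1)^{-1} j_t(Z_2)$ or the Pfaffian/characteristic-polynomial data of the pencil, or the structure of the derivation algebra $\mathrm{Der}(\mathfrak{n}_t)$; and (5) check the dimension count ($10$) and note that since all $N_t$ are connected and simply connected, isomorphism of Lie groups is equivalent to isomorphism of Lie algebras.

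The main obstacle, I expect, is step (3) combined with the design in step (1): one must choose the pencil $j_t$ so that (a) enough orthogonal symmetry $K$ persists for \emph{every} $t$ to force the GO property, (b) the GO linear system is genuinely solvable and leaves a full $3$-parameter freedom in the metric rather than collapsing to fewer, and (c) despite this symmetry the algebras remain pairwise non-isomorphic as $t$ varies. These pull in opposite directions — more symmetry makes GO easier but tends to rigidify the isomorphism type — so the construction must be finely tuned; verifying (b) will likely require an explicit block-matrix computation of $j_t(Z)$ restricted to each $K$-isotypic component and checking that the relevant operators commute appropriately (the "$j(Z)$ normal"/naturally-reductive-type conditions). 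Steps (2), (4), (5) are then comparatively routine: (4) amounts to exhibiting one continuously varying invariant, and (5) is standard Lie theory.
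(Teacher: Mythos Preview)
Your overall strategy---two-step nilpotent algebras defined by a pencil $j_t:\mathfrak{z}\to\mathfrak{so}(\mathfrak{v})$, a compact group of orthogonal automorphisms to drive the GO property via Lemma~\ref{GO-criterion}, and a pencil invariant such as the Pfaffian for non-isomorphism---matches the paper in spirit, and your diagnosis of the main tension (symmetry for GO versus variation for non-isomorphism) is exactly right. But two of your concrete guesses are off and hide the key idea. The split is $\dim\mathfrak{v}=8$, $\dim\mathfrak{z}=2$, not $7+3$; an odd $\dim\mathfrak{v}$ cannot support the non-singular $J_Z$ the construction uses, and more to the point the whole mechanism rests on $\mathbb{R}^8=\mathbb{R}^4\oplus\mathbb{R}^4$ together with $\mathfrak{so}(4)=\mathfrak{so}(3)\oplus\mathfrak{so}(3)$. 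The $2$-dimensional pencil $\mathcal{V}_t$ is placed block-diagonally with each $4\times 4$ block lying in \emph{one} $\mathfrak{so}(3)$ summand (the second block carrying the parameter $t$); the \emph{other} $\mathfrak{so}(3)$ summand, in each block, then furnishes a $6$-dimensional \emph{centralizer} $\mathcal{Z}$ of $\mathcal{V}_t$ in $\mathfrak{so}(8)$. Because each copy of $Sp(1)$ acts transitively on $S^3$, one can, for any $X\in\mathbb{R}^8$ and any $Z\in\mathcal{V}_t$, solve $D(X)=Z(X)$ block by block with $D\in\mathcal{Z}$. This is the transitive normalizer condition (Definition~\ref{de_tnc}) verified not by a generic $K$ and an isotypic decomposition, but by the centralizer itself---the missing idea in your plan.

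Second, the $3$ metric parameters do not come from rescaling $K$-irreducible pieces of $\mathfrak{v}$; the inner product on $\mathfrak{v}=\mathbb{R}^8$ stays the standard one throughout (cf.\ Proposition~\ref{gonil2}). They come entirely from the inner product on the $2$-dimensional center $\mathfrak{z}\cong\mathcal{V}_t$: since $[\mathcal{Z},\mathcal{V}_t]=0$, \emph{every} inner product on $\mathcal{V}_t$ is $\ad(\mathcal{Z})$-invariant (this is precisely the point of Remark~\ref{re_sub_1}), and the space of inner products on a plane is $3$-dimensional. Your step~(3) as written---solving a linear system in scaling constants on $\mathfrak{v}$---would instead produce constraints and likely collapse the family. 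Non-isomorphism is then read off from the Pfaffian form $(x^2+y^2)(t^2x^2+y^2)$, exactly along the lines you anticipated in step~(4).
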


Our second main result is the following generalization of the previous theorem.

\begin{theorem}\label{tm_2}
For any $k\geq 1$, there is a $k$-parameter family of pairwise non-isomorphic connected and simply connected  nilpotent Lie groups
$N_{t_1,t_2,\dots,t_k}$ of dimension $4k+6$, such that every of them
admits $3$-parameter family of Riemannian geodesic orbit metrics.
\end{theorem}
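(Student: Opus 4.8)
The plan is to prove Theorem~\ref{tm_2} by writing down the groups $N_{t_1,\dots,t_k}$ explicitly, the case $k=1$ being the construction behind Theorem~\ref{tm_1} with one extra ``quaternionic block'' added per parameter. For $t=(t_1,\dots,t_k)$ in a suitable range (e.g.\ $1<t_1<\dots<t_k$) I would take the two-step nilpotent Lie algebra $\mathfrak{n}_t=\mathfrak{v}\oplus\mathfrak{z}$ with $\mathfrak{z}$ two-dimensional, $[\mathfrak{v},\mathfrak{v}]=\mathfrak{z}$, and $\mathfrak{v}=\mathfrak{v}_0\oplus\mathfrak{v}_1\oplus\dots\oplus\mathfrak{v}_k$ an orthogonal sum of $k+1$ copies of $\mathbb{H}\cong\mathbb{R}^4$, so that $\dim\mathfrak{n}_t=4(k+1)+2=4k+6$. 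The bracket is prescribed by the maps $j(z)\in\mathfrak{so}(\mathfrak{v})$, $\langle j(z)v,w\rangle=\langle z,[v,w]\rangle$: $j$ is block-diagonal; $j(e_1)$ acts on every block by a fixed unit imaginary-quaternion right multiplication $J$ (so $J^2=-\Id$ on $\mathfrak{v}$); and $j(e_2)$ acts on the reference block $\mathfrak{v}_0$ by a fixed right multiplication $K$ anticommuting with $J$, and on $\mathfrak{v}_i$ by $t_iK$, so that $-j(e_2)^2$ equals $t_i^2\,\Id$ on $\mathfrak{v}_i$ (and $\Id$ on $\mathfrak{v}_0$). Everything is recorded in a concrete basis so that the rest is linear algebra.

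Then I would establish pairwise non-isomorphism. The centre and the derived subalgebra of $\mathfrak{n}_t$ are canonical, so any isomorphism $\mathfrak{n}_s\to\mathfrak{n}_t$ induces a linear isomorphism of the centres and descends to one of $\mathfrak{v}\cong\mathfrak{n}_t/\mathfrak{z}$, intertwining the brackets regarded as maps $\Lambda^2\mathfrak{v}\to\mathfrak{z}$; hence the $\mathrm{GL}(\mathfrak{v})\times\mathrm{GL}(\mathfrak{z})$-equivalence class of the pencil $\{\langle[\cdot,\cdot],z\rangle:z\in\mathfrak{z}\}$ of alternating $2$-forms on $\mathfrak{v}$ is an isomorphism invariant. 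Taking a non-degenerate member of the pencil as a reference, one gets a well-defined endomorphism of $\mathfrak{v}$ (essentially $J^{-1}j(e_2)$) on which $\mathrm{GL}(\mathfrak{z})$ acts by Möbius transformations; its eigenvalues are $\pm it_1,\dots,\pm it_k$ together with the reference eigenvalues $\pm i$, and on the range $1<t_1<\dots<t_k$ these multisets are not identified by the residual (scaling and inversion) symmetry. Hence the $\mathfrak{n}_t$ are pairwise non-isomorphic, and therefore so are the associated connected, simply connected Lie groups $N_{t_1,\dots,t_k}$.

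For the metrics, on each $\mathfrak{n}_t$ I would take the left-invariant metric $g_{\alpha,\beta,\gamma}$ from the inner product that keeps $\mathfrak{v}\perp\mathfrak{z}$ and rescales $\mathfrak{v}_0$ by $\alpha$, $\mathfrak{v}_1\oplus\dots\oplus\mathfrak{v}_k$ by $\beta$, and $\mathfrak{z}$ by $\gamma$ ($\alpha,\beta,\gamma>0$): a $3$-parameter family. To verify the GO property I present $N_t$ as the homogeneous Riemannian space $(N_t\rtimes K)/K$ with $K=\Aut(\mathfrak{n}_t)\cap\mathrm{O}(\mathfrak{n}_t,g_{\alpha,\beta,\gamma})$ acting by isometries; as being GO with respect to some group of isometries forces being GO with respect to the full isometry group, it suffices to check the criterion of Lemma~\ref{GO-criterion} with $\mathfrak{p}=\mathfrak{n}_t$, $\mathfrak{h}=\Lie(K)$. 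Writing $X=v+z$ ($v\in\mathfrak{v}$, $z\in\mathfrak{z}$) and using that a metric-skew derivation of a two-step nilpotent algebra with $\mathfrak{v}\perp\mathfrak{z}$ preserves each summand, that criterion reduces to: for all $v\in\mathfrak{v}$ and $z\in\mathfrak{z}$ there is $D\in\Lie(K)$ with $D(z)=0$ and $D(v)=j(z)v$. Such a $D$ is built block by block: on $\mathfrak{v}_i\cong\mathbb{H}$ the operator $j(z)$ is right multiplication by some imaginary quaternion $q_i$, so left multiplication by $v_i\,q_i\,v_i^{-1}$ (taken $0$ if $v_i=0$) is a skew derivation that fixes $\mathfrak{z}$ and agrees with $j(z)$ on $v_i$; summing over $i$ produces $D$, and the scalings by $t_i$ and by $\alpha,\beta,\gamma$ only change the $q_i$ by positive factors, leaving the argument intact.

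The main difficulty is the tension between the two halves of the proof. Checking the reduced GO criterion wants a large isometric automorphism algebra, which pushes the construction toward the rigid Heisenberg-type metric algebras, of which there are only finitely many in each dimension; producing a continuum of pairwise non-isomorphic groups forces enough asymmetry — the twists $t_i$, which move the eigenvalues of $j(e_2)$ — while keeping, for every $v$ and $z$, the required derivation $D$ inside $\Lie(K)$. Designing the $j$-maps so that the per-block conjugation argument still closes up while the pencil invariant still separates all $k$ parameters, and recognising that with a two-dimensional centre one needs $k+1$ quaternionic blocks (one of them only to normalise the scaling), whence dimension $4k+6$ and in particular $10$ when $k=1$, is the heart of the matter. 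I expect the GO verification to be the most computation-heavy step; once the pencil invariant is set up the non-isomorphism is comparatively routine, and simple connectedness is automatic for the chosen groups.
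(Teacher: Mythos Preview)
Your construction and the GO verification are essentially the paper's: the Lie algebras $\mathfrak{n}_{t_1,\dots,t_k}$ are built from $k+1$ quaternionic blocks with the $j$-maps acting by right multiplication by imaginary quaternions (the paper writes this via the $L$-matrices spanning one $\mathfrak{so}(3)$-summand of $\mathfrak{so}(4)=\mathfrak{so}(3)\oplus\mathfrak{so}(3)$), and the required skew derivation $D$ is found block by block using left multiplication (the paper's $R$-matrices, i.e.\ the centralizer $\mathcal{Z}$). For non-isomorphism the paper invokes the Pfaffian form $\prod_{j}(t_j^2x^2+y^2)$ and cites \cite{Sc67} rather than your pencil-eigenvalue invariant, but these encode the same data (the roots of the Pfaffian are precisely your $\pm it_j$).

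The one genuine difference is how the $3$-parameter family of GO metrics arises. The paper keeps the standard inner product on $\mathfrak{v}=\mathbb{R}^{4k+4}$ and observes that, because the transitive normalizer condition is already satisfied by the \emph{centralizer} $\mathcal{Z}$ of $\mathcal{V}$, \emph{every} inner product on the $2$-dimensional center $\mathfrak{z}\cong\mathcal{V}$ is automatically $\ad(\mathcal{Z})$-invariant (Remark~\ref{re_sub_1} with $\mathcal{N}'=\mathcal{Z}$); the three parameters are simply the entries of an arbitrary inner product on a plane. Your family instead rescales $\mathfrak{v}_0$, the remaining $\mathfrak{v}_i$, and $\mathfrak{z}$ by $\alpha,\beta,\gamma$, and you then need the extra (correct) observation that under these scalings the $J$-operators remain blockwise right multiplications by imaginary quaternions, so the conjugation trick still produces the derivation $D$. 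Both routes are valid; the paper's is cleaner in that the number $3$ comes directly from $\dim\mathfrak{z}=2$ together with the centralizer property, with no further verification needed, while yours makes the freedom on $\mathfrak{v}$ visible and in fact shows that for $k\ge 2$ one could enlarge the family further by scaling the blocks $\mathfrak{v}_i$ independently.
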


In addition, we pay attention to a special class of GO-nilmanifolds, namely
GO-nilmani\-folds of the centralizer type.
We establish some properties of such GO-nilmanifolds
that allow us to hope to obtain their classification at least for small dimensions.
\smallskip

The paper is organized as follows. In Section \ref{sec_2}, we recall important results on
Riemannian geodesic orbit metrics on nilpotent Lie groups. The main role here is played by C.~Gordon's results on the structure of geodesic orbital nilmanifolds
and on the description of GO-metrics on nilpotent Lie groups.
We consider some natural example of GO-nilmanifolds in Section \ref{sec_3}.
In Section \ref{sec_4}, we discuss GO-nilmanifolds of the centralizer type, some special class of GO-nilmanifolds that contains (in particular)
all two-step nilpotent GO-nilmanifolds with $2$-dimensional center.
Finally, we prove the main results in Section \ref{sec_5}.

\section{Riemannian geodesic orbit metrics on nilpotent Lie groups}\label{sec_2}

We discuss some properties of GO-nilmanifolds. The foundations of the corresponding theory were developed by C.~Gordon in~\cite{Gor96}.
We recall some important facts. In what follows we consider only connected and simply connected nilpotent Lie group $N$ supplied with some left-invariant
Riemannian metric $g$, and we call $(N,g)$ a nilmanifold.

The book \cite{GoKham1996} can be cited as a standard source on the theory of nilpotent groups and Lie algebras.
Note that the class of nilpotent Lie algebras is very wide and there
is no hope of obtaining a reasonable classification of them in an arbitrary dimension. Nevertheless, the classification of nilpotent
Lie algebras of small dimensions is known.
The classification of complex nilpotent Lie algebras of small dimension has a long history, yet only for dimension $\leq 7$ has it been completed, see
e.~g. \cite{MiJi19} for a survey.
Recall that the classification of complex two-step nilpotent (metabelian, in other terms) Lie algebras of dimension $\leq 9$ is obtained in \cite{GaTi99}.
See also \cite{YD13} and \cite{IKP22}. Important structure and (partial) classification results on 2-step nilpotent Lie algebra could be found in the following papers
by P.~Eberlein:
\cite{Eber1, Eber2, Eber3}.

{\bf In what follows, we consider only real nilpotent Lie algebras $\mathfrak{n}$}.
Recall that the corresponding Lie groups $N$ are assumed connected and simply connected. This imply that $N$ is diffeomorphic to a Euclidean space
(a detailed description of GO-manifolds
diffeomorphic to Euclidean spaces is obtained in \cite{GorNik2018}).
\smallskip

It is known that the full connected isometry group $G=\Isom(N,g)$ of a given nilmanifold
$(N,g)$ is such that $N$ is the nilradical of $G$, in particular, $N$ is a normal subgroup in $G$~\cite{Wil}.
We denote by $H$ the isotropy subgroup of $G$ at the unit element $e\in N$.

For $G/H$ as above, the Lie algebra $\mathfrak{n}={\rm Lie}(N)$ is an ideal in $\mathfrak{g}={\rm Lie}(G)$, hence we can write
\begin{equation}\label{reductivedecomposition1}
\mathfrak{g}=\mathfrak{h}\oplus \mathfrak{n},
\end{equation}
vector space direct sum, which is $\Ad_G(H)$-invariant.
The Riemannian metric $g$ corresponds to an $\Ad_G(H)$-invariant inner product $g_{eH}=(\cdot,\cdot)$ on $\mathfrak{n}$.
Let $O(\mathfrak{n}, (\cdot,\cdot))$ be the group of orthogonal maps on the metric Lie algebra $(\mathfrak{n}, (\cdot,\cdot))$
and $D(\mathfrak{n})$ the space of skew-symmetric derivations of the metric Lie algebra $(\mathfrak{n},(\cdot,\cdot))$.

If $\Phi$ is an automorphism of $G$ that normalizes $H$, then $\Phi$ induces a well-defined diffeomorphism $\overline{\Phi}$ of $G/H=N$ by
$\overline{\Phi}(aH)=\Phi(a)H$. Let us consider
$$
\Aut_{\rm{orth}}(G/H=N,g)=\{\overline{\Phi}: \Phi\in\Aut(G), \,\Phi(H)=H, \,\mbox{and}\,\overline{\Phi}_{*eH}\in O(\mathfrak{n}, (\cdot,\cdot))\}
$$
where $\Aut(G)$ is the authomormism group of the Lie group $G$
and $\overline{\Phi}_*$ is the differential of $\overline{\Phi}$.
We have

\begin{lemma}[E.N.~Wilson \cite{Wil}]\label{lem.wilson} Let $(N,g)$ be a Riemannian nilmanifold and $(\cdot,\cdot)$ the associated inner product on the Lie algebra $\mathfrak{n}$.
Then $\Isom(N,g)=N\rtimes H$, where $H=\Aut_{\rm{orth}}(N,g)$.
Thus the full isometry algebra of $(N,g)$ is the semi-direct sum
$\mathfrak{n}\rtimes \mathfrak{h}$, where~$\mathfrak{h}$
is the space $D(\mathfrak{n})$ of skew-symmetric derivations of  $(\mathfrak{n},(\cdot,\cdot))$.
\end{lemma}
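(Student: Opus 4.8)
The plan is to build on the structural fact, cited just above from \cite{Wil}, that for a nilmanifold $(N,g)$ the group $N$ sits inside $G=\Isom(N,g)$ as its nilradical, and in particular is normal in $G$. Once $N\trianglelefteq G$ is known, the semidirect product structure $G=N\rtimes H$ with $H$ the isotropy at $e$ follows from a general principle: since $N$ acts simply transitively on itself, the map $N\times H\to G$, $(n,h)\mapsto nh$, is a bijection (surjectivity because $G=N H$, as $N$ is already transitive; injectivity because $N\cap H=\{e\}$, since a point stabilizer meets a simply transitive subgroup trivially), and it is a group homomorphism precisely because $N$ is normal. So the first step is simply to assemble this and note $\mathfrak{g}=\mathfrak{n}\rtimes\mathfrak{h}$ on the Lie algebra level, matching \eqref{reductivedecomposition1}.

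The substantive step is the identification $H=\Aut_{\mathrm{orth}}(N,g)$, equivalently $\mathfrak{h}=D(\mathfrak{n})$, the skew-symmetric derivations of $(\mathfrak{n},(\cdot,\cdot))$. First I would prove the inclusion $\Aut_{\mathrm{orth}}(N,g)\subseteq H$: if $\Phi\in\Aut(G)$ normalizes $H$ and $\overline\Phi_{*e}$ is orthogonal, then $\overline\Phi$ is a diffeomorphism of $N$ fixing $e$ whose differential at $e$ preserves $(\cdot,\cdot)=g_e$; since $g$ is left-invariant and $\overline\Phi$ is induced by a group-type map (conjugation/automorphism behaviour intertwining left translations), $\overline\Phi$ preserves $g$, hence lies in $\Isom(N,g)$ and fixes $e$, i.e.\ in $H$. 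Conversely, for $h\in H$ one must show $h$ arises from an automorphism of $G$ fixing $e$ and acts orthogonally at $e$; acting orthogonally at $e$ is immediate since $h$ is an isometry fixing $e$ and $g_e=(\cdot,\cdot)$. The real content is that $h$, viewed on $N$, is actually a Lie group automorphism of $N$ — this uses that $N$ is the \emph{nilradical} of $G$, so every automorphism of $G$ (in particular conjugation by $h$) preserves $N$, and the restriction to $N$ of an inner automorphism of $G$ by an isometry fixing $e$ is a metric-preserving automorphism of $N$. Differentiating at $e$, such an automorphism of $N$ becomes an automorphism of $\mathfrak{n}$ that is orthogonal for $(\cdot,\cdot)$; passing to the isotropy Lie algebra, the corresponding elements of $\mathfrak{h}$ act on $\mathfrak{n}$ as derivations (being $\ad$-type brackets into the ideal $\mathfrak{n}$) that are skew-symmetric (orthogonality of a one-parameter group of automorphisms). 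This gives $\mathfrak{h}\subseteq D(\mathfrak{n})$, and combined with $D(\mathfrak{n})\subseteq\mathfrak{h}$ — every skew-symmetric derivation $\mathrm{D}$ exponentiates to a one-parameter group of metric automorphisms of $N$, hence of isometries fixing $e$ — yields equality.

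The main obstacle is making the second inclusion precise, i.e.\ showing that an abstract isometry $h$ fixing $e$ must be a group automorphism of $N$ and not merely a metric-preserving diffeomorphism; this is exactly where normality of $N$ in $G$ (its being the nilradical) is essential, and I would invoke \cite{Wil} for that structural input rather than reprove it. The rest — matching the subgroup-level and Lie-algebra-level statements, and checking that derivations being skew-symmetric corresponds to the isotropy acting by elements of $O(\mathfrak{n},(\cdot,\cdot))$ at the infinitesimal level — is routine once the correspondence ``automorphisms of $N$ fixing the metric $\leftrightarrow$ orthogonal automorphisms of $\mathfrak{n}$'' and its infinitesimal version ``skew-symmetric derivations'' are set up, using that $N$ is simply connected so that $\Aut(N)\cong\Aut(\mathfrak{n})$.
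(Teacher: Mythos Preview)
The paper does not give its own proof of this lemma: it is stated as a citation of Wilson's result \cite{Wil} and used as a black box, with no argument supplied. So there is no ``paper's proof'' to compare your proposal against.

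That said, your outline is a reasonable reconstruction of the argument. The decomposition $G=N\rtimes H$ from normality of $N$ plus simple transitivity is correct, and your two inclusions for $\mathfrak{h}=D(\mathfrak{n})$ are the right ones to check. One small point to tighten: your paragraph on $H\subseteq \Aut_{\mathrm{orth}}(N,g)$ says ``the restriction to $N$ of an inner automorphism of $G$ by an isometry fixing $e$ is a metric-preserving automorphism of $N$''; the metric-preserving part needs a line of justification, since conjugation by $h$ on $N$ is the map $n\mapsto hnh^{-1}$, and you should say explicitly why this coincides with the isometry $h$ viewed as a diffeomorphism of $N=G/H$ (use that $h\cdot(nH)=(hnh^{-1})H$). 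Also note that the paper's $\Aut_{\mathrm{orth}}$ is defined via automorphisms of $G$ normalizing $H$, not directly via automorphisms of $N$; your argument implicitly identifies these, which is fine since $N$ is simply connected and the inner automorphism of $G$ by $h\in H$ restricts to an automorphism of the normal subgroup $N$, but it is worth saying.
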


In particular, if Riemannian nilmanifolds $(N_1,g_1)$ and $(N_2,g_2)$ are isometric to each other, then the Lie group $N_1$ is isomorphic to $N_2$, as well as
their Lie algebras are isomorphic to each other.
\smallskip

Lemma \ref{lem.wilson} implies that the full isometry algebra of $(N,g)$ is the semi-direct sum $\mathfrak{n}\rtimes \mathfrak{h}$, where~$\mathfrak{h}$
is the space $D(\mathfrak{n})$ of skew-symmetric derivations of the metric Lie algebra $(\mathfrak{n},(\cdot,\cdot))$ \cite{Wil, GorWil1985}.
Let us recall the following important result.

\begin{prop}[C.~Gordon \cite{Gor96}]\label{gonil1}
If $(N,g)$ is geodesic orbit Riemannian manifold, then the Lie algebra $\mathfrak{n}= {\rm Lie}(N)$ is either commutative or two-step nilpotent.
\end{prop}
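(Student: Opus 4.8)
The plan is to use the GO-criterion (Lemma~\ref{GO-criterion}) together with Wilson's description of the isometry algebra (Lemma~\ref{lem.wilson}) to show that the descending central series of $\mathfrak{n}$ terminates after two steps, i.e. $[\mathfrak{n},[\mathfrak{n},\mathfrak{n}]]=0$. By Lemma~\ref{lem.wilson} we may take $G=N\rtimes H$ with $\mathfrak{g}=\mathfrak{n}\rtimes\mathfrak{h}$, $\mathfrak{h}=D(\mathfrak{n})$ the skew-symmetric derivations of $(\mathfrak{n},(\cdot,\cdot))$, and reductive complement $\mathfrak{p}=\mathfrak{n}$; note $[\cdot,\cdot]_{\mathfrak{p}}$ here is: for $X,Y\in\mathfrak{n}$ it is the ordinary bracket $[X,Y]$ in $\mathfrak{n}$, while $[Z,Y]_{\mathfrak{p}}=Z(Y)$ for $Z\in\mathfrak{h}$ a derivation and $Y\in\mathfrak{n}$. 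So the criterion reads: for every $X\in\mathfrak{n}$ there is a skew-symmetric derivation $D=D_X$ with
$$
([X,Y]+D(Y),X)=0\qquad\text{for all }Y\in\mathfrak{n}.
$$

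First I would extract the key algebraic identity. Fix $X\in\mathfrak{n}$ and the corresponding derivation $D=D_X$. Since $D$ is skew-symmetric, $(D(Y),X)=-(Y,D(X))$, so the criterion becomes $([X,Y],X)=(Y,D(X))$ for all $Y$; equivalently, writing $j(X)\colon\mathfrak{n}\to\mathfrak{n}$ for the skew-symmetric-in-appropriate-sense operator $Y\mapsto [X,Y]$ paired against $X$, one gets that the vector $D(X)$ is determined: $D(X)=\mathrm{ad}_X^{*}(X)$ where $\mathrm{ad}_X^*$ is the metric adjoint of $\mathrm{ad}_X$. The decisive step is then to iterate. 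Because $D$ is a derivation, for $U,V\in\mathfrak{n}$ we have $D([U,V])=[D(U),V]+[U,D(V)]$, and $D$ is skew-symmetric; I would plug the special structure $D(X)=\mathrm{ad}_X^*(X)$ into these relations, differentiating along $X$ (replacing $X$ by $X+tW$ and comparing orders in $t$, exploiting that $D_{X}$ may vary with $X$ but the scalar identity holds identically), to force vanishing of iterated brackets. Concretely, the combination of skew-symmetry of $\mathrm{ad}_X$ "up to the derivation correction" and the Jacobi identity should give that $\langle [\mathfrak{n},[\mathfrak{n},\mathfrak{n}]],[\mathfrak{n},[\mathfrak{n},\mathfrak{n}]]\rangle$-type pairings collapse, hence $[\mathfrak{n},[\mathfrak{n},\mathfrak{n}]]=0$.

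The main obstacle is precisely this iteration/polarization step: the derivation $D=D_X$ depends on $X$, and one only controls its value $D_X(X)$ on the single vector $X$, not $D_X$ as an operator. The trick I would use is to polarize in $X$: apply the criterion to $X$, to $X'$, and to $X+X'$, and subtract, to obtain a bilinear relation $([X,Y],X')+([X',Y],X)=(Y,D_X(X'))+(Y,D_{X'}(X))$; the right-hand side lies in the "image of derivations" which, combined with the fact that every skew-symmetric derivation kills the orthogonal complement considerations and respects the central series, can be pushed around using Jacobi to land inside $[\mathfrak{n},\mathfrak{n}]$ and then iterated once more to reach the second derived ideal. An alternative, cleaner route — which I would actually prefer to write up — is to invoke the structure theory: show that the nilradical $\mathfrak{n}$ with a GO metric admits a derivation $D$ (sum of the $\mathfrak{h}$-action) that is symmetric positive on a complement of $[\mathfrak{n},\mathfrak{n}]$, reducing to Gordon's original argument in~\cite{Gor96}; but since the problem says I may assume only what is stated in the excerpt, I would carry out the direct computation above, with the polarization identity as the technical heart. $\rule{0.5em}{0.5em}$
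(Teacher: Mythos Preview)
The paper does not give its own proof of this proposition; it is simply quoted from Gordon's paper \cite{Gor96} as background, so there is nothing here to compare your argument against beyond the original reference.

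Your setup is correct: via Lemma~\ref{lem.wilson} one may take $\mathfrak{p}=\mathfrak{n}$ and $\mathfrak{h}=D(\mathfrak{n})$, and the GO-criterion then yields, for each $X\in\mathfrak{n}$, a skew-symmetric derivation $D_X$ with $D_X(X)=\mathrm{ad}_X^{*}(X)$. But the argument stops exactly where the content lies. The polarization step you propose does not work as written: applying the criterion to $X$, to $X'$, and to $X+X'$ produces three \emph{unrelated} derivations $D_X$, $D_{X'}$, $D_{X+X'}$, and subtracting the scalar identities gives an expression involving $D_{X+X'}(X+X')-D_X(X)-D_{X'}(X')$, not the symmetric quantity $D_X(X')+D_{X'}(X)$ you wrote down. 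The assignment $X\mapsto D_X$ need not be linear, or even continuous, so no bilinear identity drops out, and the vague phrases ``should give'' and ``can be pushed around using Jacobi'' are exactly the missing proof.

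The mechanism in Gordon's original argument is structural rather than computational: every derivation preserves each term $\mathfrak{n}^{i}$ of the lower central series, hence a \emph{skew-symmetric} derivation also preserves each orthogonal complement $(\mathfrak{n}^{i})^{\perp}$. Coupled with the fact that $\mathrm{ad}_X^{*}$ annihilates $(\mathfrak{n}^{2})^{\perp}$ (because $\mathrm{ad}_X$ has image in $\mathfrak{n}^{2}=[\mathfrak{n},\mathfrak{n}]$), one projects the identity $D_X(X)=\mathrm{ad}_X^{*}(X)$ onto successive pieces of this orthogonal filtration; if $\mathfrak{n}$ were $3$-step or longer, this forces a nonzero vector to be orthogonal to itself. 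Your sketch contains neither this filtration bookkeeping nor a working substitute for it.
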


In the case when $\mathfrak{n}$ is commutative, $(N,g)$ is Euclidean space. Hence, {\bf in what follow we suppose that $\mathfrak{n}$ is two-step nilpotent}.
\medskip

Now we recall one helpful method to represent any two-step nilpotent metric Lie algebra.
Let $\mathfrak{n}$ be a two-step nilpotent Lie algebra with an inner product $(\cdot,\cdot)$.
Denote by $\mathfrak{z}$ the center of $\mathfrak{n}$ and by $\mathfrak{v}$ the $(\cdot,\cdot)$-orthogonal complement to $\mathfrak{z}$ in $\mathfrak{n}$.
It is clear that $[\mathfrak{n}, \mathfrak{n}]= [\mathfrak{v}, \mathfrak{v}] \subset \mathfrak{z}$.
We denote by $\mathfrak{so}(\mathfrak{z})$ and $\mathfrak{so}(\mathfrak{v})$ the algebras of skew symmetric transformations of $(\mathfrak{z}, (\cdot,\cdot))$ and
$(\mathfrak{v}, (\cdot,\cdot))$ respectively.
It is easy to see that any $D\in D(\mathfrak{n})=\mathfrak{h}$ saves both $\mathfrak{z}$  and  $\mathfrak{v}$.
\smallskip

For any $Z\in \mathfrak{z}$, we consider the operator
\begin{equation}\label{eq_j_z_1}
J_Z:\mathfrak{v} \rightarrow \mathfrak{v}, \mbox{\,\,\,\, such that\,\,\,\,}
(J_Z(X),Y)=([X,Y],Z), \quad X,Y\in \mathfrak{v}.
\end{equation}
It is clear that $J_Z$ are skew-symmetric and $J_Z(Y)=(\ad Y)'(Z)$, where $(\ad Y)'$ is adjoint to $\ad Y$ with respect to $(\cdot,\cdot)$.
The map $J:Z \rightarrow J_Z$ is obviously linear.

\smallskip
Recall the following important result.

\begin{prop}[C.~Gordon \cite{Gor96}]\label{gonil1n}
In the above notations, $(N,g)$ is geodesic orbit Riemannian manifold if and only if for any $X\in \mathfrak{z}$ and
$Y\in \mathfrak{v}$ there is $D\in D(\mathfrak{n})$ such that
$[D,X]=D(X)=0$, $[D,Y]=D(Y)=J_X(Y)$.
\end{prop}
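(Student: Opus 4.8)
The plan is to deduce the proposition directly from the Kowalski--Vanhecke criterion of Lemma~\ref{GO-criterion}, applied to the canonical presentation $N=G/H$ with $G=\Isom(N,g)$ and $H$ the (compact) isotropy subgroup at $e$. By Lemma~\ref{lem.wilson} the isometry algebra splits as the $\Ad_G(H)$-invariant reductive decomposition $\mathfrak{g}=\mathfrak{h}\oplus\mathfrak{n}$ of \eqref{reductivedecomposition1} with $\mathfrak{h}=D(\mathfrak{n})$, so Lemma~\ref{GO-criterion} with $\mathfrak{p}=\mathfrak{n}$ says: $(N,g)$ is geodesic orbit if and only if for every $W\in\mathfrak{n}$ there is $D\in D(\mathfrak{n})$ with $([W+D,U]_{\mathfrak{n}},W)=0$ for all $U\in\mathfrak{n}$.

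The next step is to unwind this condition. In $\mathfrak{n}\rtimes\mathfrak{h}$ one has $[W+D,U]=[W,U]+D(U)\in\mathfrak{n}$, hence $[W+D,U]_{\mathfrak{n}}=[W,U]+D(U)$, and since $D$ is skew-symmetric,
\[
([W+D,U]_{\mathfrak{n}},W)=([W,U],W)-(U,D(W)).
\]
Writing $W=X+Y$, $U=X'+Y'$ with $X,X'\in\mathfrak{z}$ and $Y,Y'\in\mathfrak{v}$, centrality of $\mathfrak{z}$ gives $[W,U]=[Y,Y']$, and since $[\mathfrak{v},\mathfrak{v}]\subset\mathfrak{z}$ is orthogonal to $\mathfrak{v}$,
\[
([W,U],W)=([Y,Y'],X)=(J_X Y,Y')
\]
by the definition \eqref{eq_j_z_1} of $J_X$. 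As every $D\in D(\mathfrak{n})$ preserves both $\mathfrak{z}$ and $\mathfrak{v}$ (recalled above), the mixed terms vanish and $(U,D(W))=(X',D(X))+(Y',D(Y))$. Thus, for a fixed $W=X+Y$, the condition of Lemma~\ref{GO-criterion} reads
\[
(J_X Y,Y')-(X',D(X))-(Y',D(Y))=0\qquad\text{for all }X'\in\mathfrak{z},\ Y'\in\mathfrak{v}.
\]

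Finally I would read off the statement: taking $Y'=0$ and varying $X'$ forces $D(X)=0$; taking $X'=0$ and varying $Y'$ forces $D(Y)=J_X Y$; conversely these two equalities make the last display hold identically. Since $W$ ranges over all of $\mathfrak{n}$ exactly when $(X,Y)$ ranges over $\mathfrak{z}\times\mathfrak{v}$, and $[D,X]=D(X)$, $[D,Y]=D(Y)$ in the isometry algebra, this is precisely the asserted criterion. I do not foresee any real obstacle: the only points needing attention are the identification $[W+D,U]_{\mathfrak{n}}=[W,U]+D(U)$, the fact that skew-symmetric derivations respect the orthogonal splitting $\mathfrak{n}=\mathfrak{z}\oplus\mathfrak{v}$ so that the mixed $\mathfrak{z}$--$\mathfrak{v}$ inner products drop out, and the trivial observation that passing from the ``paired'' vector $W=X+Y$ in Lemma~\ref{GO-criterion} to independent $X\in\mathfrak{z}$, $Y\in\mathfrak{v}$ costs nothing.
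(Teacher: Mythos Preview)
The paper does not supply its own proof of this proposition; it is quoted as a result of C.~Gordon from \cite{Gor96}. Your argument is correct and is exactly the natural derivation one would expect: specialize Lemma~\ref{GO-criterion} to the reductive decomposition \eqref{reductivedecomposition1}, use that $\mathfrak{n}$ is an ideal so that $[W+D,U]_{\mathfrak{n}}=[W,U]+D(U)$, and exploit the orthogonal splitting $\mathfrak{n}=\mathfrak{z}\oplus\mathfrak{v}$ together with the definition \eqref{eq_j_z_1} of $J_X$ and the fact (stated in the paper just before the proposition) that skew-symmetric derivations preserve both summands. There is nothing to compare against, and no gap to flag.
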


It is clear that
$J_Z\equiv 0$ for $Z\in \mathfrak{z}$ if and only if $Z$ is orthogonal to $[\mathfrak{v},\mathfrak{v}]\subset \mathfrak{z}$
(recall that $(J_Z(X),Y)=([X,Y],Z)$ for $X,Y \in \mathfrak{v}$).
Therefore, $J:\mathfrak{z} \rightarrow \mathfrak{so}(\mathfrak{v})$ is an injective map for any two-step nilpotent metric Lie algebra with
$[\mathfrak{n},\mathfrak{n}]=\mathfrak{z}$, because
$\mathfrak{z}=[\mathfrak{n},\mathfrak{n}]=[\mathfrak{v},\mathfrak{v}]$.
It should be noted that the latter condition is not too restrictive.

\begin{lemma}\label{le_nonsing}
Let $\mathfrak{n}$ be a two-step nilpotent Lie algebra with the center $\mathfrak{z}$ distinct from $[\mathfrak{n},\mathfrak{n}]$
Then $\mathfrak{n}$, supplied with an inner product $(\cdot,\cdot)$, generates a Riemannian nilmanifold $(N,g)$ that is a direct metric product of some Euclidean space
and some nilmanifold $(N_1,g_1)$.
Here $N_1$ is a Lie subgroup of $N$, $g_1$ is the restriction of $g$ to $N_1$,
and the Lie algebra $\mathfrak{n}_1$ of $N_1$ is a semi-direct sum of
$[\mathfrak{n},\mathfrak{n}]$ and $\mathfrak{v}$, where $\mathfrak{v}$ is an $(\cdot,\cdot)$-orthogonal complement to $\mathfrak{z}$ in $\mathfrak{n}$.
Moreover, the center of $\mathfrak{n}_1$ is $[\mathfrak{n}_1,\mathfrak{n}_1]=[\mathfrak{n},\mathfrak{n}]=[\mathfrak{v},\mathfrak{v}]$.
\end{lemma}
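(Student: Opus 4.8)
The plan is to peel off the part of the centre that does not arise as a commutator, first at the level of the metric Lie algebra and then at the level of the nilmanifold. Keeping the notation of the excerpt ($\mathfrak{v}=\mathfrak{z}^{\perp}$ in $\mathfrak{n}$, so that $[\mathfrak{n},\mathfrak{n}]=[\mathfrak{v},\mathfrak{v}]\subset\mathfrak{z}$), I would set $\mathfrak{z}_0$ to be the $(\cdot,\cdot)$-orthogonal complement of $[\mathfrak{n},\mathfrak{n}]$ inside $\mathfrak{z}$, so that $\mathfrak{z}=[\mathfrak{n},\mathfrak{n}]\oplus\mathfrak{z}_0$ orthogonally and, by the hypothesis $\mathfrak{z}\ne[\mathfrak{n},\mathfrak{n}]$, we have $\mathfrak{z}_0\ne\{0\}$. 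Put $\mathfrak{n}_1=\mathfrak{v}\oplus[\mathfrak{n},\mathfrak{n}]$; since $[\mathfrak{n},\mathfrak{n}]\subset\mathfrak{z}\perp\mathfrak{v}$, this is an orthogonal sum, and $\mathfrak{n}=\mathfrak{n}_1\oplus\mathfrak{z}_0$ is an orthogonal decomposition of the vector space $\mathfrak{n}$.

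Next I would verify that this is a decomposition into ideals. By construction $\mathfrak{z}_0$ is central in $\mathfrak{n}$, so it is an (abelian) ideal; and $[\mathfrak{n},\mathfrak{n}_1]=[\mathfrak{v}+\mathfrak{z},\,\mathfrak{v}+[\mathfrak{v},\mathfrak{v}]]=[\mathfrak{v},\mathfrak{v}]\subset\mathfrak{n}_1$, so $\mathfrak{n}_1$ is an ideal as well, with $[\mathfrak{n}_1,\mathfrak{z}_0]=\{0\}$. Hence $\mathfrak{n}=\mathfrak{n}_1\oplus\mathfrak{z}_0$ is a direct sum of ideals, orthogonal with respect to $(\cdot,\cdot)$, in which $\mathfrak{z}_0$ is abelian and central. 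I would then pass to the group level: since $N$ is connected and simply connected, this ideal decomposition integrates to a Lie group direct product $N=N_1\times Z_0$, where $N_1$ and $Z_0$ are the connected, simply connected subgroups with Lie algebras $\mathfrak{n}_1$ and $\mathfrak{z}_0$, and $Z_0$ is abelian, hence $Z_0\cong\mathbb{R}^{\dim\mathfrak{z}_0}$. Because $(\cdot,\cdot)$ is the orthogonal sum of its restrictions to $\mathfrak{n}_1$ and $\mathfrak{z}_0$, the left-invariant metric $g$ is the Riemannian product of the left-invariant metrics $g_1$ on $N_1$ and $g_0$ on $Z_0$; the latter is a complete flat metric on an abelian group, so $(Z_0,g_0)$ is a Euclidean space, which gives the asserted metric splitting $(N,g)=(N_1,g_1)\times(\text{Euclidean space})$, with $\mathfrak{n}_1=\mathfrak{v}+[\mathfrak{v},\mathfrak{v}]$ a semidirect sum of $[\mathfrak{n},\mathfrak{n}]$ and $\mathfrak{v}$.

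Finally I would identify the centre of $\mathfrak{n}_1$. Since $\mathfrak{n}_1$ is two-step nilpotent, $[\mathfrak{n}_1,\mathfrak{n}_1]=[\mathfrak{v},\mathfrak{v}]=[\mathfrak{n},\mathfrak{n}]$ is contained in its centre. Conversely, if $X=v+c$ with $v\in\mathfrak{v}$ and $c\in[\mathfrak{n},\mathfrak{n}]$ lies in the centre of $\mathfrak{n}_1$, then $[v,\mathfrak{v}]=\{0\}$; together with $[v,\mathfrak{z}]=\{0\}$ this yields $v\in\mathfrak{z}\cap\mathfrak{v}=\{0\}$, so $X=c\in[\mathfrak{n},\mathfrak{n}]$. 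Thus the centre of $\mathfrak{n}_1$ equals $[\mathfrak{n}_1,\mathfrak{n}_1]=[\mathfrak{n},\mathfrak{n}]=[\mathfrak{v},\mathfrak{v}]$. There is no serious obstacle here; the only point that deserves a line of justification is the group-level step, namely that for a simply connected $N$ a decomposition of $\mathfrak{n}$ into ideals integrates to a genuine direct product of Lie subgroups and that an orthogonal decomposition of the defining inner product produces a Riemannian product — all of which is standard.
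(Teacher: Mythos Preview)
Your argument is correct and takes a genuinely different route from the paper. The paper computes the Levi--Civita connection via the Koszul formula on left-invariant fields, shows that every $Z\in\mathfrak{z}$ orthogonal to $[\mathfrak{n},\mathfrak{n}]$ is parallel ($\nabla_X Z=0$ for all $X$), and then invokes the de~Rham decomposition theorem to split off a flat Euclidean factor. You instead argue purely at the Lie-algebra level: decompose $\mathfrak{n}=\mathfrak{n}_1\oplus\mathfrak{z}_0$ as an \emph{orthogonal} direct sum of ideals, integrate to $N\cong N_1\times Z_0$ using simple connectedness, and observe that the orthogonality of the ideal decomposition forces the left-invariant metric to be the Riemannian product of the left-invariant metrics on the factors. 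Your approach is more elementary---it avoids the connection and the de~Rham theorem entirely---and makes the Lie-theoretic structure of the splitting transparent; the paper's approach, by contrast, shows directly why the Euclidean factor is \emph{flat} from the Riemannian viewpoint and would generalise to situations where one only knows parallelism rather than an ideal decomposition. Your identification of the centre of $\mathfrak{n}_1$ is also cleaner than the paper's one-line assertion.
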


\begin{proof}
We consider the Riemannian connection $\nabla$ associated with a
Riemannian metric~$g$. This connection assigns to each pair of smooth
vector fields $X$ and $Y$ a smooth vector field $\nabla_XY$ called the covariant
derivative of $Y$ in the direction $X$.

If $X, Y, Z$ are all left invariant vector fields on a Lie group $N$ with a left invariant Riemannian metric $g$, then
we have the following formula:
$$
(\nabla_X Z,Y) = \frac{1}{2} \Bigl(([X,Z],Y) - ([Z,Y],X) + ([Y, X], Z)\Bigr),
$$
see e.~g.  (5.3) in \cite{Milnor}.
If $Z \in \mathfrak{z}$ is such that $(Z,[\mathfrak{n},\mathfrak{n}])=0$, then $(\nabla_X Z,Y)=0$ for all $X,Z \in \mathfrak{n}$, hence, $\nabla_X Z=0$ for
any $X \in \mathfrak{n}$.
Therefore, the distribution on $(N,g)$, generated by $\{Z\in \mathfrak{z}\,|\, (Z,[\mathfrak{n},\mathfrak{n}])=0\}$, is parallel (with respect to $\nabla$) and it
determines a flat (Euclidean) factor in $(N,g)$ (see e.~g. Theorem~10.43 in \cite{Bes}).
In other words, $(N,g)$ is a direct metric product of some Euclidean space and some nilmanifold $(N_1,g_1)$,
where $N_1$ is a subgroup of $N$ with the Lie subalgebra $[\mathfrak{n},\mathfrak{n}]$, while $g_1$ is the restriction of $g$ to $N_1$.
Now it is clear that $[\mathfrak{n}_1,\mathfrak{n}_1]=[\mathfrak{n},\mathfrak{n}]=[\mathfrak{v},\mathfrak{v}]$ is the center of $\mathfrak{n}_1$.
\end{proof}
\medskip

{\bf In what follows, we suppose that} $[\mathfrak{n},\mathfrak{n}]=\mathfrak{z}$, $m:=\dim (\mathfrak{z})$,
$n:=\dim (\mathfrak{v})=\dim (\mathfrak{n})-\dim (\mathfrak{z})$.
In particular, the linear map $J:=\mathfrak{z} \mapsto J_Z$ is injective, $\mathcal{V}=\{J_Z\,|\, Z\in \mathfrak{z}\}$ is $m$-dimensional linear subspace
in $\mathfrak{so}(\mathfrak{v})$.
\bigskip

If $\varphi:\mathfrak{h} \to \mathfrak{so}(\mathfrak{v})$ is the restriction of isotropy representation to $\mathfrak{v}$, we may reformulate the condition
of Proposition \ref{gonil1n} as follows.
We know that $\mathcal{V}=J(\mathfrak{z})$ is a linear subspace in $\mathfrak{so}(\mathfrak{v})$.
Further, for every $X\in \mathfrak{h}$ and $Z\in \mathfrak{z}$ we get
$J_{[X,Z]}=[\varphi(X),J_Z]$ (it easily follows from the condition on $X$ to be skew-symmetric derivation), hence,
the subspace $\mathcal{V}=J(\mathfrak{z})$ is normalized by the subalgebra $\mathcal{N}:=\varphi(\mathfrak{h})$ in
$\mathfrak{so}(\mathfrak{v})$. The equality $J_{[X,Z]}=[\varphi(X),J_Z]$ implies that
the representation $\varphi:\mathfrak{h} \to \mathfrak{so}(\mathfrak{v})$ is faithful
(otherwise, some non-trivial $X\in \mathfrak{h}$ acts trivially both on $\mathfrak{v}$
and on $\mathfrak{z}$, hence, on $\mathfrak{n}$).
Therefore, we have

\begin{enumerate}[\quad a)]
\item a Lie subalgebra $\mathcal{N}\subset \mathfrak{so}(\mathfrak{v})$ (acted on $\mathfrak{v}$) and
\item an $\ad(\mathcal{N})$-invariant
module $\mathcal{V}$ in $\mathfrak{so}(\mathfrak{v})$,
\end{enumerate}
such that
for every $Y\in \mathfrak{v}$ and $Z\in \mathcal{V}$  there is $X \in \mathcal{N}$ with the following properties:
$[X,Z]=0$ and $X(Y)=Z(Y)$.
\smallskip

Since $\dim (\mathfrak{v})=n$, we naturally identify $\mathfrak{so}(\mathfrak{v})$ with $\mathfrak{so}(n)$.
\smallskip

\begin{definition}[ \cite{Gor96}]\label{de_tnc}
Let $\mathcal{V}$ be a linear subspace of $\mathfrak{so}(n)$ and $\mathcal{N}$ the normalizer of $\mathcal{V}$ in $\mathfrak{so}(n)$.
We say that $\mathcal{V}$
satisfies {\it the transitive normalizer condition} if
for every $Y\in \mathbb{R}^n$ and every $Z\in \mathcal{V}$  there is some $X \in \mathcal{N}$ such that
$[X,Z]=0$ and $X(Y)=Z(Y)$.
\end{definition}

\begin{prop}[C.~Gordon \cite{Gor96}]\label{gonil2}
Let $\mathcal{V}$ be a linear subspace of $\mathfrak{so}(n)$ with the normalizer $\mathcal{N}\subset \mathfrak{so}(n)$.
Suppose that $\mathcal{V}$
satisfies the transitive normalizer condition.
Then the metric Lie algebra $(\mathcal{V}\rtimes \mathbb{R}^n, (\cdot,\cdot)_1+(\cdot,\cdot)_2)$
defines a geodesic orbit nilmanifold, where
$(\cdot,\cdot)_1$ is any $\ad(\mathcal{N})$-invariant inner product on $\mathcal{V}$,
$(\cdot,\cdot)_2$ is the standard inner product in $\mathbb{R}^n$,
$[X,Y]=0$ if $X\in \mathcal{V}$ and $Y\in \mathcal{V}$ or $Y\in \mathbb{R}^n$,
and $([X,Y],Z)_1=(Z(X),Y)_2$ for all $X,Y \in \mathbb{R}^n$ and $Z\in \mathcal{V}$.
In particular, $\mathcal{V}$ is the derived algebra of $\mathcal{V}\rtimes \mathbb{R}^n$ and, moreover, if for any $Y\in \mathbb{R}^n$ there is $Z\in \mathcal{V}$
such that $0\neq Z(Y) \in \mathbb{R}^n$, then $\mathcal{V}$ is the center of $\mathcal{V}\rtimes \mathbb{R}^n$.
\end{prop}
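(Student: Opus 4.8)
The plan is to realize the metric Lie algebra $\mathfrak{n}:=\mathcal{V}\rtimes\mathbb{R}^n$, equipped with the inner product $(\cdot,\cdot):=(\cdot,\cdot)_1+(\cdot,\cdot)_2$ (for which $\mathcal{V}\perp\mathbb{R}^n$), as a two-step nilpotent metric Lie algebra of exactly the shape to which Gordon's criterion, Proposition \ref{gonil1n}, applies, and then to feed the transitive normalizer condition into that criterion. First I would check that $\mathfrak{n}$ is well defined: the bracket on $\mathbb{R}^n$ prescribed by $([X,Y],Z)_1=(Z(X),Y)_2$ exists and is unique because $(\cdot,\cdot)_1$ is nondegenerate on $\mathcal{V}$, it is alternating because every $Z\in\mathcal{V}\subset\mathfrak{so}(n)$ is skew-symmetric, and the Jacobi identity is automatic, since every bracket among three elements of $\mathfrak{n}$ that has a factor in $\mathcal{V}$ vanishes; in particular $\mathfrak{n}$ is two-step nilpotent and $[\mathfrak{n},\mathfrak{n}]=[\mathbb{R}^n,\mathbb{R}^n]\subseteq\mathcal{V}$.

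Next I would pin down the objects appearing in Proposition \ref{gonil1n}. Taking $\mathfrak{v}=\mathbb{R}^n$ and evaluating (\ref{eq_j_z_1}), for $Z\in\mathcal{V}$ one gets $(J_Z(X),Y)_2=([X,Y],Z)_1=(Z(X),Y)_2$ for all $X,Y$, i.e.\ $J_Z=Z$ as an operator on $\mathbb{R}^n$; hence $\{J_Z\mid Z\in\mathcal{V}\}$ is precisely $\mathcal{V}$. The same identity shows that a $Z\in\mathcal{V}$ which is $(\cdot,\cdot)_1$-orthogonal to $[\mathbb{R}^n,\mathbb{R}^n]$ kills all of $\mathbb{R}^n$, hence is $0$ in $\mathfrak{so}(n)$; therefore $\mathcal{V}=[\mathfrak{n},\mathfrak{n}]$ is the derived algebra. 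Since $[\mathcal{V},\mathfrak{n}]=0$, $\mathcal{V}$ lies in the center, and a central $Y_0\in\mathbb{R}^n$ would satisfy $[Y_0,X]=0$ for all $X\in\mathbb{R}^n$, i.e.\ $Z(Y_0)=0$ for all $Z\in\mathcal{V}$; the extra hypothesis in the last sentence of the statement excludes this, giving that $\mathcal{V}$ is then the whole center. This settles the ``derived algebra'' and ``center'' clauses.

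The heart of the argument is producing skew-symmetric derivations out of $\mathcal{N}$. For $A\in\mathcal{N}$ I would set $\widehat{A}\in\End(\mathfrak{n})$ with $\widehat{A}|_{\mathbb{R}^n}=A$ and $\widehat{A}|_{\mathcal{V}}=\ad(A)|_{\mathcal{V}}$ (well defined into $\mathcal{V}$ because $A$ normalizes $\mathcal{V}$). It is skew-symmetric for $(\cdot,\cdot)$: on $\mathbb{R}^n$ because $A\in\mathfrak{so}(n)$, on $\mathcal{V}$ because $(\cdot,\cdot)_1$ is $\ad(\mathcal{N})$-invariant, with no mixed terms since $\widehat{A}$ preserves the splitting. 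The only non-trivial point is that $\widehat{A}$ is a derivation, for which the one relation to check is $\widehat{A}[X,Y]=[\widehat{A}X,Y]+[X,\widehat{A}Y]$ with $X,Y\in\mathbb{R}^n$ (all other cases reduce to vanishing brackets together with $\widehat{A}(\mathcal{V})\subseteq\mathcal{V}$); I would verify it by pairing both sides against an arbitrary $Z\in\mathcal{V}$ under $(\cdot,\cdot)_1$ and using, in turn, the defining relation $([U,V],Z)_1=(Z(U),V)_2$, the skew-symmetry of $A$, and the $\ad(\mathcal{N})$-invariance of $(\cdot,\cdot)_1$: both sides collapse to $([Z,A](X),Y)_2$, so they agree by nondegeneracy. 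Thus $\widehat{A}\in D(\mathfrak{n})$ for every $A\in\mathcal{N}$. I expect this derivation check to be the main obstacle, as it is the only place where the hypothesis on $(\cdot,\cdot)_1$ is used essentially and is where sign errors are easy.

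Finally I would invoke Proposition \ref{gonil1n}. Given $X\in\mathfrak{z}=\mathcal{V}$ and $Y\in\mathbb{R}^n=\mathfrak{v}$, the transitive normalizer condition (Definition \ref{de_tnc}), applied to $Y$ and $Z:=X\in\mathcal{V}$, supplies $A\in\mathcal{N}$ with $[A,X]=0$ and $A(Y)=X(Y)$. Then $D:=\widehat{A}\in D(\mathfrak{n})$ satisfies $D(X)=[A,X]=0$ and $D(Y)=A(Y)=X(Y)=J_X(Y)$, so the criterion holds and $(N,g)$ is a geodesic orbit nilmanifold, $N$ being the connected simply connected Lie group with Lie algebra $\mathfrak{n}$ and $g$ the left-invariant metric induced by $(\cdot,\cdot)$. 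For completeness one notes that an $\ad(\mathcal{N})$-invariant inner product on $\mathcal{V}$ always exists, since $\mathcal{N}$ is a subalgebra of the compact Lie algebra $\mathfrak{so}(n)$ and $\mathcal{V}$ is a finite-dimensional $\mathcal{N}$-module.
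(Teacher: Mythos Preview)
The paper does not give its own proof of this proposition; it is quoted from Gordon~\cite{Gor96}. Your argument is the natural one and is essentially correct: build $\widehat{A}\in D(\mathfrak{n})$ from each $A\in\mathcal{N}$ and feed the transitive normalizer condition into Proposition~\ref{gonil1n}. The derivation check and the identifications $J_Z=Z$, $[\mathfrak{n},\mathfrak{n}]=\mathcal{V}$ are all carried out correctly.

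There is one small gap. In the last paragraph you write ``$X\in\mathfrak{z}=\mathcal{V}$ and $Y\in\mathbb{R}^n=\mathfrak{v}$'', but you yourself observed two paragraphs earlier that the center $\mathfrak{z}$ may strictly contain $\mathcal{V}$: any $Y_0\in\mathbb{R}^n$ with $Z(Y_0)=0$ for all $Z\in\mathcal{V}$ is central. Proposition~\ref{gonil1n} is formulated with $\mathfrak{z}$ equal to the \emph{full} center and $\mathfrak{v}=\mathfrak{z}^{\perp}$, so in general you must take $X=X_1+X_2$ with $X_1\in\mathcal{V}$ and $X_2\in K:=\bigcap_{Z\in\mathcal{V}}\ker Z\subset\mathbb{R}^n$, and arrange $D(X_2)=0$ as well. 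Your $\widehat{A}$ gives only $\widehat{A}(X_2)=A(X_2)\in K$, not necessarily $0$. The fix is immediate: since $A$ normalizes $\mathcal{V}$ it preserves $K$, so $A|_K\in\mathfrak{so}(K)$; extending $A|_K$ by zero on $\mathcal{V}\oplus K^{\perp}$ yields a skew-symmetric derivation $B$ of $\mathfrak{n}$ (all brackets involving $K$ vanish), and $D:=\widehat{A}-B$ then satisfies $D(X_1)=D(X_2)=0$ and $D(Y)=J_X(Y)$ for $Y\in K^{\perp}=\mathfrak{v}$. Alternatively, invoke Lemma~\ref{le_nonsing} to split off the flat factor and reduce to the case $\mathfrak{z}=\mathcal{V}$, where your argument applies verbatim.
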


\begin{remark}\label{re_sub_1}
Suppose, that a linear subspace $\mathcal{V}$ of $\mathfrak{so}(n)$ with the normalizer $\mathcal{N}\subset \mathfrak{so}(n)$
satisfies the transitive normalizer condition.
It is possible that there is a Lie subalgebra  $\mathcal{N}'\subset  \mathcal{N}$ such that
for every $Y\in \mathbb{R}^n$ and every $Z\in \mathcal{V}$  there is some $X \in \mathcal{N}'$ such that
$[X,Z]=0$ and $X(Y)=Z(Y)$.
This means that even the subalgebra $\mathcal{N}'$ together with $\mathcal{V}$ generate a geodesic orbit nilmanifold.
Moreover, since the condition on
$(\cdot,\cdot)_1$ to be $\ad(\mathcal{N}')$-invariant is weaker than the condition to be $\ad(\mathcal{N})$-invariant, we can get more GO-metric using
less extensive subgroup $\mathcal{N}'$.
\end{remark}

\begin{definition}\label{de_tncwrt}
We will say that $\mathcal{V}$
satisfies {\it the transitive normalizer condition with respect to $\mathcal{N}'$} if
$\mathcal{N}'\subset  \mathcal{N}$ is as in Remark \ref{re_sub_1}.
\end{definition}
\smallskip

One obvious possibility to choose $\mathcal{V}$, satisfied the transitive normalizer condition, is the following:
$\mathcal{V}$ is a Lie subalgebra of $\mathfrak{so}(n)$. The following result is valid (see Section 2 in~\cite{Gor96}).

\begin{prop}\label{pr_natred_1}
Suppose that $\mathcal{V}$ is a Lie subalgebra of  $\mathfrak{so}(n)$ {\rm(}in particular,  $\dim(\mathcal{V})=1${\rm)}.
Then $\mathcal{V}\subset \mathcal{N}$ and we can take $\mathcal{N}'=\mathcal{V}$ in the notation of Remark \ref{re_sub_1}.
Any corresponding GO-nilmanifold $(N,g)$ {\rm(}that depends on $\ad(\mathcal{V})$-invariant inner product on $\mathcal{V}${\rm)} is naturally reductive.
On the other hand, if a subspace $\mathcal{V}\subset \mathfrak{so}(n)$ determined a naturally reductive GO-manifold, then $\mathcal{V}$
is a subalgebra of  $\mathfrak{so}(n)$.
\end{prop}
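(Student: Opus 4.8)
The statement splits into the direct implications and the converse; we sketch a proof of each.

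\emph{The direct implications.} If $\mathcal V$ is a subalgebra, then $[\mathcal V,\mathcal V]\subseteq\mathcal V$, so $\mathcal V\subseteq\mathcal N$ is immediate, and $\mathcal V$ satisfies the transitive normalizer condition with respect to $\mathcal N'=\mathcal V$ for the trivial reason that, given $Y\in\mathbb R^n$ and $Z\in\mathcal V$, the element $X:=Z\in\mathcal V$ satisfies $[X,Z]=0$ and $X(Y)=Z(Y)$. To prove that the resulting $(N,g)$ — with $\mathfrak n=\mathcal V\rtimes\mathbb R^n$ and $(\cdot,\cdot)=(\cdot,\cdot)_1+(\cdot,\cdot)_2$, where $(\cdot,\cdot)_1$ is $\ad(\mathcal V)$-invariant — is naturally reductive, we would first observe that every $W\in\mathcal V$ induces a skew-symmetric derivation $D_W$ of $(\mathfrak n,(\cdot,\cdot))$, acting as $W$ on $\mathbb R^n$ and as $\ad_{\mathcal V}(W)=[W,\cdot\,]$ on $\mathcal V$; the only non-trivial instance of the derivation identity — the one for brackets of two elements of $\mathbb R^n$ — reduces to the $\ad(\mathcal V)$-invariance of $(\cdot,\cdot)_1$ together with the skew-symmetry of $W$, while the remaining cases are trivial because $\mathcal V$ is central in $\mathfrak n$. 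The map $W\mapsto D_W$ is an injective Lie algebra homomorphism $\mathcal V\to D(\mathfrak n)$; let $\widetilde{\mathfrak h}$ be its image, $\mathfrak g':=\mathfrak n\rtimes\widetilde{\mathfrak h}\subseteq\mathfrak n\rtimes D(\mathfrak n)$, and $G'$ the corresponding connected subgroup of $\Isom(N,g)$ with isotropy $H'$ at $e$, so that $N=G'/H'$ and $\Lie(H')=\widetilde{\mathfrak h}$.

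\emph{Choosing the reductive complement.} The crucial, and somewhat delicate, step is the choice of complement; it has to be
\[
\mathfrak p:=\{\,v+W+D_W\ :\ v\in\mathbb R^n,\ W\in\mathcal V\,\}\subset\mathfrak g',
\]
the graph of $W\mapsto D_W$ enlarged by $\mathbb R^n$, \emph{with coefficient $1$}. One checks at once that $\mathfrak p$ is a vector space complement to $\widetilde{\mathfrak h}$ in $\mathfrak g'$ and is $\Ad(H')$-invariant (equivalently $\ad(\widetilde{\mathfrak h})$-invariant, as $H'$ is connected), since $[D_{W'},\,v+W+D_W]=W'(v)+[W',W]+D_{[W',W]}\in\mathfrak p$. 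Identifying $\mathfrak p$ with $T_eN=\mathfrak n$ through the projection $\mathfrak g'\to\mathfrak g'/\widetilde{\mathfrak h}$, the induced inner product on $\mathfrak p$ is $(v+W+D_W,\,v'+W'+D_{W'})=(v,v')_2+(W,W')_1$. Writing $p_i=v_i+W_i+D_{W_i}$, a short computation of the bracket in $\mathfrak g'$ gives that the $\mathfrak n$-component of $[p_0,p_1]_{\mathfrak p}$ equals $\bigl(W_0(v_1)-W_1(v_0)\bigr)+[v_0,v_1]_{\mathfrak n}+2[W_0,W_1]$; pairing it with $v_0+W_0$ and using (i) the skew-symmetry of $W_0$, (ii) the defining relation $([v_0,v_1]_{\mathfrak n},Z)_1=(Z(v_0),v_1)_2$, and (iii) the $\ad(\mathcal V)$-invariance of $(\cdot,\cdot)_1$ (whence $([W_0,W_1],W_0)_1=0$), one obtains $([p_0,p_1]_{\mathfrak p},p_0)=0$. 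Thus $(N=G'/H',g)$ is naturally reductive; the parenthetical case $\dim\mathcal V=1$ is included, $\mathcal V$ being then automatically a subalgebra. If the coefficient $1$ were replaced by a scalar $\lambda$, the same computation would give $(1-\lambda)\,([v_0,v_1]_{\mathfrak n},W_0)_1$ instead, which is why $\lambda=1$ is forced — and why the naive choice $\mathfrak p=\mathfrak n$ fails.

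\emph{The converse.} Assume now that $(N,g)$ is naturally reductive, under the standing hypotheses $[\mathfrak n,\mathfrak n]=\mathfrak z$, $\mathfrak n=\mathfrak z\oplus\mathfrak v$, $\mathcal V=J(\mathfrak z)\subseteq\mathfrak{so}(\mathfrak v)$. By Lemma~\ref{lem.wilson} and the structure of transitive isometry groups of nilmanifolds \cite{Wil}, we may take the acting group $G'$ to contain $N$ as a normal subgroup, i.e. $\mathfrak g'=\mathfrak n\rtimes\mathfrak k$ with $\mathfrak k\subseteq D(\mathfrak n)=\mathfrak h$ a subalgebra, together with an $\Ad(H')$-invariant complement $\mathfrak p$ to $\mathfrak k$ in $\mathfrak g'$ satisfying $([X,Y]_{\mathfrak p},X)=0$. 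Since $\mathfrak p$ maps isomorphically onto $\mathfrak g'/\mathfrak k\cong\mathfrak n$, write $\mathfrak p=\{X+\psi_X:X\in\mathfrak n\}$ with $\psi\colon\mathfrak n\to\mathfrak k$ linear; each $\psi_X$, being a skew-symmetric derivation of $\mathfrak n$, preserves $\mathfrak z$ and $\mathfrak v$. Expanding $([X+\psi_X,\,Y+\psi_Y]_{\mathfrak p},\,X+\psi_X)=0$ and using the skew-symmetry of $\psi_X,\psi_Y$ yields $\psi_X(X)=(\ad X)'(X)$ for every $X\in\mathfrak n$; polarizing this and specializing to $X=v\in\mathfrak v$, $X'=Z\in\mathfrak z$ gives $\psi_v(Z)+\psi_Z(v)=(\ad v)'(Z)=J_Z(v)$ (recall $J_Z(Y)=(\ad Y)'(Z)$), and since $\psi_v(Z)\in\mathfrak z$ while $\psi_Z(v),J_Z(v)\in\mathfrak v$, comparing components forces $\psi_v(Z)=0$ and $\psi_Z|_{\mathfrak v}=J_Z$ for all $Z\in\mathfrak z$. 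Finally, $\Ad(H')$-invariance of $\mathfrak p$ is equivalent to the equivariance $\psi_{D(X)}=[D,\psi_X]$ (commutator of operators on $\mathfrak n$) for all $D\in\mathfrak k$ and $X\in\mathfrak n$; applying this with $D=\psi_{Z'}\in\mathfrak k$ and $X=Z$ (so that $\psi_{Z'}(Z)\in\mathfrak z$) and restricting the resulting identity $\psi_{\psi_{Z'}(Z)}=[\psi_{Z'},\psi_Z]$ to $\mathfrak v$, where $\psi_W|_{\mathfrak v}=J_W$ for $W\in\mathfrak z$, produces $J_{\psi_{Z'}(Z)}=[J_{Z'},J_Z]$. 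Hence $[\mathcal V,\mathcal V]\subseteq\mathcal V$, so $\mathcal V$ is a Lie subalgebra of $\mathfrak{so}(\mathfrak v)=\mathfrak{so}(n)$. We expect the converse to be the main obstacle: it requires the structural reduction to a group $G'$ containing $N$ normally, and then the extraction — from the single scalar identity $([X,Y]_{\mathfrak p},X)=0$ together with the decomposition $\mathfrak n=\mathfrak z\oplus\mathfrak v$ — of the sharp relation $\psi_Z|_{\mathfrak v}=J_Z$; once that is available, the equivariance of $\mathfrak p$ closes $\mathcal V$ under the bracket at once.
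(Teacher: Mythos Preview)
Your argument is essentially correct and far more explicit than the paper's own proof, which reads in full: ``Since $[\mathcal V,\mathcal V]\subset\mathcal V$ and $\mathcal N=\{U\in\mathfrak{so}(n)\mid [U,\mathcal V]\subset\mathcal V\}$, then $\mathcal V\subset\mathcal N$. All other assertions easily follow from Proposition~\ref{gonil2}, Remark~\ref{re_sub_1}, and Theorem~2.8 in~\cite{Gor96}.'' In other words, the paper outsources both the natural reductivity and its converse to Gordon's Theorem~2.8, whereas you reconstruct the argument by hand: you build the reductive complement $\mathfrak p$ as the graph of $W\mapsto D_W$ (and correctly observe that the coefficient must be~$1$), and for the converse you extract $\psi_Z|_{\mathfrak v}=J_Z$ from the polarized identity $\psi_X(X)=(\ad X)'(X)$ and then close $\mathcal V$ under the bracket via the $\ad(\mathfrak k)$-equivariance of $\psi$. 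All of these computations are right.

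The one genuine soft spot is the step you yourself flag: passing from ``$(N,g)$ is naturally reductive with respect to \emph{some} transitive $G'$'' to ``we may take $\mathfrak g'=\mathfrak n\rtimes\mathfrak k$ with $\mathfrak n$ an ideal.'' Wilson's lemma (Lemma~\ref{lem.wilson}) gives this only for the \emph{full} isometry group; it does not by itself say that an arbitrary naturally reductive presentation can be replaced by one with $N\lhd G'$. That reduction is precisely the structural content of Gordon's results (cf.\ \cite{Gor85,Gor96}) that the paper is invoking with its citation of Theorem~2.8. So your proof is not quite self-contained at that point, and the reference to~\cite{Wil} alone does not cover it; but once that reduction is granted, the remainder of your converse is clean and complete. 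The upshot is that your approach and the paper's are not really different in substance --- you are unpacking exactly what the citation to Gordon hides --- but your write-up has the virtue of making the mechanism (the graph complement and the identity $\psi_Z|_{\mathfrak v}=J_Z$) visible.
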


\begin{proof}
Since $[\mathcal{V}, \mathcal{V}]\subset \mathcal{V}$ and $\mathcal{N}=\{U\in \mathfrak{so}(n) \,|\, [U,\mathcal{V}]\subset \mathcal{V}\}$, then
$\mathcal{V} \subset \mathcal{N}$. All other assertions easily follows from Proposition \ref{gonil2}, Remark \ref{re_sub_1}, and Theorem 2.8 in \cite{Gor96}.
\end{proof}

\begin{remark}\label{re_iso_2}
If $\dim (\mathcal{V})=1$, then the structure of the corresponding nilpotent Lie algebra depend of one operator
$J_Z$, where $Z \in \mathcal{V}$ and has norm $1$.
The simplest non-trivial example is $J_Z=\diag \left(J,J,\cdots,J\right)$, where we have $n$ numbers of blocks
$J=\left(%
\begin{array}{cc}
  0 & 1 \\
  -1 & 0 \\
\end{array}%
\right)$, corresponds to a simply-connected Heisenberg
group of dimension $2n + 1$ for every
$n \geq 1$.
In the general case, $J_Z$ could be any skew-symmetric, then the corresponding
GO-nilmanifolds constructed from operators $J_{Z_1}$ and $J_{Z_2}$ with distinct sets of eigenvalues (counted with multiplicities) are not isometric each to other.
\end{remark}

\begin{remark}\label{re_iso_2.5}
It is possible that $\mathcal{V}$
is a Lie  subalgebra of  $\mathfrak{so}(n)$, but its normalizer $N$ contains a Lie subalgebra $\mathcal{V}\oplus \mathcal{N}'$ such that
$\mathcal{V}$ satisfies the transitive normalizer condition with respect to $\mathcal{N}'$. In this case $\mathcal{V}$
can be supplied with an arbitrary inner product, since any such inner product is $\ad(\mathcal{N}')$-invariant ($[\mathcal{V},\mathcal{N}']=0$).
If $\mathcal{V}$ is not abelian, then there is an inner product on $\mathcal{V}$, that is not $\ad(\mathcal{V})$-invariant.
Therefore, the corresponding GO-nilmanifold is not naturally reductive.
The simplest example is $\mathcal{V}\oplus \mathcal{N}'=\mathfrak{so}(3)\oplus \mathfrak{so}(3)=\mathfrak{so}(4)$, see
details and other examples see Section \ref{sec_3}.
\end{remark}

In what follows, we will work with GO-nilmanifolds that are not naturally reductive.
We know that the relation $[\mathcal{V}, \mathcal{V}]\not \subset \mathcal{V}$ is sufficient for this.
\smallskip

Let $\mathfrak{n}$ be a two-step nilpotent Lie algebra with the center $\mathfrak{z}$. Then  $\mathfrak{n}$ is called
{\it non-singular} (often called also {\it regular} or {\it fat}) if the operator $\ad(X): \mathfrak{n} \rightarrow \mathfrak{z}$
is surjective for all $X \in \mathfrak{n} \setminus \mathfrak{z}$.

It is obvious that $[\mathfrak{n},\mathfrak{n}]=\mathfrak{z}$ for any two-step nilpotent non-singular Lie algebra.
\smallskip

Let $\mathfrak{n}$ be a two-step nilpotent non-singular Lie algebra supplied with an inner product $(\cdot,\cdot)$, $\mathfrak{z}$ is the center of $\mathfrak{n}$
and $\mathfrak{v}$ is an $(\cdot,\cdot)$-orthogonal complement to $\mathfrak{z}$ in $\mathfrak{n}$.
Since $\mathfrak{n}$ is non-singular, then all operators $J_Z$ are bijective for nontrivial $Z\in \mathfrak{z}$.
Indeed, if $J_Z(X)=0$ for some non-trivial $X\in \mathfrak{v}$, then
$0=(J_Z(X),Y)=([X,Y],Z)$ for all $Y\in \mathfrak{v}$. Hence, the image of the operator $\ad(X):\mathfrak{n} \rightarrow \mathfrak{z}$ is not whole $\mathfrak{z}$,
that is impossible. Therefore, the operator $J_Z:\mathfrak{v} \rightarrow \mathfrak{v}$ is bijective for any $Z\neq 0$. Since $J_Z$ is skew-symmetric, then
$n=\dim(\mathfrak{v})$ is even.

Let us consider the unit sphere $S=\{X\in \mathfrak{v}\>|\,(X,X)=1\}$ in $\mathfrak{v}$. Any $Z\in \mathfrak{z}$ determines a tangent vector fields on $S$ as follows:
$J_Z(X)$ is a tangent vector to $S$ at the point $X\in S$. Therefore,
the sphere $S$ admits $m$ linear independent tangent vector fields, where $m=\dim(\mathfrak{z})$.
It is known that $0\leq \dim (\mathfrak{z})=m < \rho(n)$, where $\rho$ is the function defined by
$\dim(\mathfrak{v})=n = (2a+1) \cdot 2^{\,4b+c}\mapsto \rho(n)=8b+2^{\,c}$, where $a,b,c \in \mathbb{N}$, $0\leq c \leq 3$, see e.~g. \cite[Theorem 8.2]{Hus}.
In particular, we get that $n$ even for $m=1$, $n=4k$ for $m\in\{2,3\}$, $n=8k$ for $m\in\{4,5,6,7\}$, where $k \in \mathbb{N}$.

Important examples of non-singular two-step nilpotent Lie algebra are so called $H$-type
algebras, which generalize Heisenberg algebras.

Let $\mathfrak{n}$ be a two-step nilpotent Lie algebra. We say that $\mathfrak{n}$ is an $H$-type
algebra if there exists an inner product $(\cdot,\cdot)$ such that $J_Z^2=-(Z,Z) \Id$ for every $Z \in \mathfrak{z}$.
We note that Heisenberg algebras are exactly $H$-type algebras with one-dimensional centers $\mathfrak{z}$.

For any $H$-type algebra, the orthogonal complement $\mathfrak{v}=\mathfrak{z}^{\perp}$ is a Cliffold module over the Clifford algebra
$C(\mathfrak{z})$ generated by $\mathfrak{z}$ and $1$ modulo relation $J_Z^2+(Z,Z)\cdot 1=0$, $Z\in \mathfrak{z}$.
Indeed, the linear map $J$ from $\mathfrak{z}$ to $\mathfrak{v}$ extends to a representation of $C(\mathfrak{z})$, and $\mathfrak{v}$ is
$C(\mathfrak{z})$-module. Moreover, every Clifford module arises in this way \cite{Kap81}.
The Clifford
modules are completely classified, see details e.~g. in \cite{Hus, Kap81}.
\smallskip

Some examples of geodesic orbit nilmanifolds are constructed using homogeneous fiber bundles
over compact symmetric spaces in \cite{Tam03}.
\smallskip

We have one useful isomorphism invariant for two-step Lie algebras $\mathfrak{n}=\mathfrak{z}\oplus\mathfrak{v}$ with even $n=\dim \mathfrak{v}$,
called {\it the Pfaffian form}, which is the projective equivalence class of the homogeneous
polynomial $f_{\mathfrak{n}}$ of degree $n/2$ in $m=\dim(\mathfrak{z})$ variables defined by
$$
\Bigl( f_{\mathfrak{n}}(Z) \Bigr)^2 = \det (J_Z), \qquad Z\in \mathfrak{z}.
$$
It is known that $\mathfrak{n}$ is non-singular if and only if $f_{\mathfrak{n}}(Z)$
is a positive polynomial, i.~e., $f_{\mathfrak{n}}(Z)>0$ for any non-zero $Z\in \mathfrak{z}$, see details in \cite{Sc67}.
\smallskip

\section{Some natural examples}\label{sec_3}

At first, we consider the following two $3$-parameter families of matrices from $\mathfrak{so}(4)$:
\begin{equation}\label{eq_leri_mult_1}
L(\beta_1,\beta_2,\beta_3)=\left(
\begin{array}{cccc}
0& -\beta_1& -\beta_2& -\beta_3\\
\beta_1& 0& -\beta_3& \beta_2\\
\beta_2& \beta_3& 0& -\beta_1\\
\beta_3& -\beta_2& \beta_1& 0\\
\end{array}%
\right), \quad \beta_1,\beta_2,\beta_3\in \mathbb{R},
\end{equation}
\begin{equation}\label{eq_leri_mult_2}
R(\gamma_1,\gamma_2,\gamma_3)=\left(
\begin{array}{cccc}
0& -\gamma_1& -\gamma_2& -\gamma_3\\
\gamma_1& 0& \gamma_3& -\gamma_2\\
\gamma_2& -\gamma_3& 0& \gamma_1\\
\gamma_3& \gamma_2& -\gamma_1& 0\\
\end{array}%
\right),  \quad \gamma_1,\gamma_2,\gamma_3\in \mathbb{R}.
\end{equation}

The following results are well known (and they are easy to prove):
Every matrix $U\in \mathfrak{so}(4)$ can be uniquely presented as
$L(\beta_1,\beta_2,\beta_3)+R(\gamma_1,\gamma_2,\gamma_3)$ for suitable $\beta_i$ and $\gamma_i$, $i=1,2,3$.
Moreover, $\left[L(\beta_1,\beta_2,\beta_3),R(\gamma_1,\gamma_2,\gamma_3)\right]=0$ for any values of $\beta_i$ and $\gamma_i$, $i=1,2,3$.
The matrices $L(\beta_1,\beta_2,\beta_3)$ for various values of $\beta_i$, $i=1,2,3$,
constitutes a Lie algebra isomorphic to $\mathfrak{so}(3)$. The same we can say about the matrices
$R(\gamma_1,\gamma_2,\gamma_3)$ for various values of $\gamma_i$, $i=1,2,3$.
Hence, we deal with the decomposition $\mathfrak{so}(4)=\mathfrak{so}(3) \oplus \mathfrak{so}(3)$
of $\mathfrak{so}(4)$ into the direct sum of its three-dimensional ideals (we may assume the first summand is determined by matrices of the form
$L(\beta_1,\beta_2,\beta_3)$, while the second summand is determined by matrices of the form $R(\gamma_1,\gamma_2,\gamma_3)$.

For a given $r>0$, we consider $S^3_r=\{X=(x_1,x_2,x_3,x_4)\in \mathbb{R}^4\,|\, x_1^2+x_2^2+x_3^2+x_4^2=r^2\}$, the sphere of radius $r$ with center at the origin.
Note that, the tangent plane to $S^3_r$ at the point $U\in S^3_r$ is naturally identified with $\{W(U)\,|\,W \in \mathfrak{so}(4)\}$.
The following result is well known,  but we consider an outline of its proof for completeness.

\begin{lemma}\label{le_leri_mult_1}
For any tangent vector $V$ to $S^3_r$ at given point $U\in S^3_r$, there is a triple of $\beta_1,\beta_2,\beta_3$,
as well a triple of $\gamma_1,\gamma_2,\gamma_3$, such that
$L(U)=R(U)=V$, where  $L=L(\beta_1,\beta_2,\beta_3)$ and $R=R(\gamma_1,\gamma_2,\gamma_3)$.
\end{lemma}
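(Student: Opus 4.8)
The plan is to reduce the statement to a dimension count plus a non-degeneracy check, using the explicit parametrization already recorded above. Fix $U\in S^3_r$. Since $U\ne 0$, the map $L(\beta_1,\beta_2,\beta_3)\mapsto L(\beta_1,\beta_2,\beta_3)(U)$ is a linear map from the $3$-dimensional space of $L$-matrices to $\mathbb{R}^4$. Its image lies in $U^{\perp}$ because every $L$-matrix is skew-symmetric, so $(L(U),U)=0$; hence the image is contained in the $3$-dimensional tangent space $T_US^3_r\cong U^{\perp}$. So it suffices to prove injectivity of this linear map, i.e. that $L(\beta_1,\beta_2,\beta_3)(U)=0$ forces $\beta_1=\beta_2=\beta_3=0$; then by rank-nullity the map is an isomorphism onto $T_US^3_r$, and in particular $V=L(\beta_1,\beta_2,\beta_3)(U)$ has a (unique) solution. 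The identical argument applies verbatim to the $R$-family.

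For the injectivity step I would argue as follows. Suppose $L(\beta_1,\beta_2,\beta_3)(U)=0$. Writing $U=(x_1,x_2,x_3,x_4)$ and using the explicit matrix (\ref{eq_leri_mult_1}), one sees that $\beta:=(\beta_1,\beta_2,\beta_3)$ being a ``left quaternion multiplication'' operator acting on $U$ viewed as a quaternion; concretely, identifying $\mathbb{R}^4$ with the quaternions $\mathbb{H}$ via $U\leftrightarrow x_1+x_2 i+x_3 j+x_4 k$ and $(\beta_1,\beta_2,\beta_3)\leftrightarrow q=\beta_1 i+\beta_2 j+\beta_3 k$, the matrix $L(\beta_1,\beta_2,\beta_3)$ represents left multiplication $U\mapsto qU$ (and $R(\gamma_1,\gamma_2,\gamma_3)$ represents right multiplication $U\mapsto -Uq'$ with $q'=\gamma_1 i+\gamma_2 j+\gamma_3 k$, up to the sign conventions in (\ref{eq_leri_mult_2})). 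Since $\mathbb{H}$ is a division algebra and $U\ne 0$, $qU=0$ forces $q=0$, i.e. $\beta_1=\beta_2=\beta_3=0$; likewise $Uq'=0$ forces $\gamma_1=\gamma_2=\gamma_3=0$. Alternatively, without invoking quaternions one can compute $\|L(\beta_1,\beta_2,\beta_3)(U)\|^2=(\beta_1^2+\beta_2^2+\beta_3^2)\,\|U\|^2=r^2(\beta_1^2+\beta_2^2+\beta_3^2)$ directly from (\ref{eq_leri_mult_1}) — a short explicit calculation — which gives the same conclusion and in fact shows $L(\beta_1,\beta_2,\beta_3)$ restricted to $U^\perp\oplus\mathbb{R}U$ scales norms, so it maps $U^\perp$ bijectively and isometrically (up to the factor $\sqrt{\beta_1^2+\beta_2^2+\beta_3^2}$) onto $T_US^3_r$.

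Putting these together: the linear map $\beta\mapsto L(\beta)(U)$ is an injective map between $3$-dimensional spaces, hence a bijection onto $T_US^3_r$, so for the prescribed $V\in T_US^3_r$ there is a (unique) triple $(\beta_1,\beta_2,\beta_3)$ with $L(\beta_1,\beta_2,\beta_3)(U)=V$, and similarly a unique $(\gamma_1,\gamma_2,\gamma_3)$ with $R(\gamma_1,\gamma_2,\gamma_3)(U)=V$. I do not expect any genuine obstacle here; the only mild bookkeeping is matching the sign conventions of (\ref{eq_leri_mult_1})--(\ref{eq_leri_mult_2}) with whichever description (quaternionic or direct norm computation) one chooses, and noting that the uniqueness is an automatic byproduct of injectivity though the statement only asks for existence.
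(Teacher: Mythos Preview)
Your argument is correct and in fact slightly sharper than the paper's, since you obtain uniqueness of the triples as a byproduct. The route, however, is genuinely different. The paper argues group-theoretically: it identifies $S^3$ with $Sp(1)$, observes that the two $\mathfrak{so}(3)$ summands in $\mathfrak{so}(4)=\mathfrak{so}(3)\oplus\mathfrak{so}(3)$ are the infinitesimal left and right translations coming from the double cover $Sp(1)\times Sp(1)\to SO(4)$, and then invokes the transitivity of each $Sp(1)$ factor on $S^3$ (citing Montgomery--Samelson) to conclude that the corresponding Lie algebra surjects onto each tangent space. You instead give a purely linear-algebraic proof: the evaluation map $\beta\mapsto L(\beta)(U)$ is linear between $3$-dimensional spaces, lands in $U^{\perp}=T_US^3_r$ by skew-symmetry, and is injective either because $\mathbb{H}$ has no zero divisors or because of the explicit norm identity $\|L(\beta)(U)\|^2=|\beta|^2\,|U|^2$. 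Your approach is more elementary and self-contained (no appeal to transitive sphere actions), while the paper's approach is more conceptual and has the advantage of setting up the $Sp(1)\times Sp(1)$ picture that is reused later in Section~\ref{sec_3} and in the proofs of the main theorems.
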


\begin{proof}
Note that the three-dimensional sphere $S^3$ topologically is the Lie group $Sp(1)$, the group of unit quaternions.
We have a natural action of $Sp(1)\times Sp(1)$ on $S^3=Sp(1)$ as follows: $(q_1,q_2): q \mapsto q_1\cdot q \cdot q_2^{-1}$.
In particular, we have a surjective homomorphism  $\psi:Sp(1)\times Sp(1) \mapsto SO(4)$ with the ineffective kernel $\mathbb{Z}_2=\{(1,1), (-1,-1)\}$.
The Lie algebras of the images of the first and the second multiples in $Sp(1)\times Sp(1)$ under $\psi$ are
the first and the second ideals in the Lie algebra $\mathfrak{so}(4)=\mathfrak{so}(3) \oplus \mathfrak{so}(3)$, that are determined by the
matrices $L(\beta_1,\beta_2,\beta_3)$ and $R(\gamma_1,\gamma_2,\gamma_3)$ respectively, see \eqref{eq_leri_mult_1} and  \eqref{eq_leri_mult_2}.

Both these images of $Sp(1)$ under $\psi$ act transitively on $S^3\subset \mathbb{R}^4$, see e.~g. \cite{MonSa43}. It implies the following observation:
If $U=(u_1,u_2,u_3,u_4)\in \mathbb{R}^4$ and $r=\bigl(u_1^2+u_2^2+u_3^2+u_4^2\bigr)^{1/2}$,
then for any $V$ in the tangent plane to the sphere $S^3_r$ at the point $U$,
there is an element $W$ in the chosen ideal  $\mathfrak{so}(3)$ such that $W(U)=V$.
\end{proof}
\smallskip

Lemma \ref{le_leri_mult_1} and the discussion before it imply the following observation: If $\mathcal{V}$ is a linear subspace
in one copy of $\mathfrak{so}(3)$ for the decomposition $\mathfrak{so}(4)=\mathfrak{so}(3) \oplus \mathfrak{so}(3)$, then the second copy
of $\mathfrak{so}(3)$, which will be denoted as $\mathcal{Z}$, centralizes $\mathcal{V}$. Moreover, $\mathcal{V}$
satisfies the transitive normalizer condition with respect to $\mathcal{Z}$, hence every inner product on $\mathcal{V}$
determines a GO-nilmanifold of dimension $\dim(\mathcal{V})+4$. Up to isomorphism, there is only one choice of $\mathcal{V}$ of dimension $1$, $2$, or $3$.
\smallskip

Let us consider a more general situation.
Suppose that $\mathcal{A}\oplus \mathcal{B}$ is a Lie subalgebra of $\mathfrak{so}(n)$ such that the corresponding Lie group $B\subset SO(n)$, where $\mathcal{B}=\Lie(B)$,
acts transitively on the unit sphere $S$ of $\mathbb{R}^n=\mathfrak{v}$.
All such Lie groups classified in \cite{MonSa43}. Note that for any $X \in S$ and any $U\in \mathfrak{so}(n)$, $U(X)$ is a tangent vector to $S$ at the point $X$.
Let us consider any linear subspace $\mathcal{V} \subset \mathcal{A}$. Then
$\mathcal{V}$ satisfies the transitive normalizer condition. Indeed, $\mathcal{B}$ is a subset of the normalizer $\mathcal{N}$ of $\mathcal{V}$ in $\mathfrak{so}(n)$.
If we fix $Z\in \mathcal{V}$ and $X \in S \subset \mathbb{R}^n$, then we can find $Y\in \mathcal{B}$ such that
$Y(X)=Z(X)$. It follows from the fact that $\Lie(\mathcal{B})$ acts transitively on the unit sphere $S$, hence, $\mathcal{B}(X)$ coincides with the tangent
space to $S$ at the point $X$. Moreover, by our assumptions, $[Y,Z]=0$. Hence, $\mathcal{V}$ satisfies the transitive normalizer condition.
By Proposition \ref{gonil2} we get a family of geodesic orbit nilmanifolds, corresponding to the metric Lie algebras
$(\mathcal{V}\rtimes \mathbb{R}^n, (\cdot,\cdot)_1+(\cdot,\cdot)_2)$, where
$(\cdot,\cdot)_1$ is any $\ad(\mathcal{B})$-invariant inner product on $\mathcal{V}$ and
$(\cdot,\cdot)_2$ is the standard inner product in $\mathbb{R}^n$. Since $[\mathcal{V},\mathcal{B}]=0$, $(\cdot,\cdot)_1$ is any invariant inner product on $\mathcal{V}$
(here we take $\mathcal{B}$ as $\mathcal{N}'$ in the notations of Remark \ref{re_sub_1}).
\smallskip

In particular, we know that $U(1)\cdot SU(n) \subset SO(2n)$ and $SU(n)$ acts transitively on the unit sphere $S$ of $\mathbb{R}^{2n}$.
Therefore, we can consider $\mathcal{V}=\mathcal{A}=\mathfrak{u}(1)$ and $\mathcal{B}=\mathfrak{su}(n)$.
The corresponding GO-nilmanifolds are the Heisenberg groups with suitable Riemannian metrics.
\smallskip

We know also that $Sp(1)\cdot Sp(n)\subset SO(4n)$ and $Sp(n)$ acts transitively on the unit sphere $S$ of $\mathbb{R}^{4n}$.
Hence, we can consider $\mathcal{V}=\mathcal{A}=\mathfrak{sp}(1)$ and $\mathcal{B}=\mathfrak{sp}(n)$. The corresponding GO-nilmanifold are the quaternionic Heisenberg groups
\cite{AFS15}.  In these two partial cases we get all possible naturally reductive $H$-type group, see \cite[Proposition 1]{Kap83}.
Note that the first examples of commutative spaces which are not weakly symmetric
(which provides an answer to Selberg's question about the existence of such examples \cite{S}) is modeled as the quaternionic Heisenberg group,
endowed with certain special naturally reductive metrics~\cite{Lau14}.
\smallskip

Instead of $Sp(1)$ we can take also $\mathcal{V}=U(1)\subset Sp(1)=\mathcal{A}$ and any two-dimensional subspace $\mathcal{V}\subset Sp(1)=\mathcal{A}$.
In the first case we again obtain the Heisenberg groups (only of dimension $4n+1$), in the second case we
get $H$-type groups of dimension $4n+2$, that are not naturally reductive, but are geodesic orbit, see \cite[Proposition 3]{Kap83}.
\smallskip

The complete classification of geodesic orbit $H$-type groups was obtained by C.~Riehm in \cite{Rie84}:
a given $H$-type group is geodesic orbit if and only if $m = 1, 2, 3$; or
$m = 5, 6$ and $n = 8$; or $m = 7$, $n =8, 16, 24$ and $\mathfrak{v}$ is an isotypic Clifford module (in this case it is equivalent to the following property:
if $Z_1,Z_2,\dots,Z_7$ is an orthonormal
basis of $\mathfrak{z}$,
the linear transformation $T: X \mapsto J_{Z_1}(J_{Z_2}(\cdots J_{Z_7}(X)\cdots))$ of $\mathfrak{v}$ is either $\Id$ or $-\Id$).
\medskip

The classification of weakly symmetric $H$-type group was obtained  in \cite{BRV}:
a given $H$-type group is weakly symmetric if and only if $m = 1, 2, 3$; or
$m = 5, 6, 7$ and $n = 8$; or $m = 7$, $n =16$ and $\mathfrak{v}$ is an isotypic Clifford module.
\medskip

There are many examples of non-singular two-step nilpotent Lie algebras that are not of $H$-type \cite{LauOs14, LT99}.
Useful information about automorphisms of this class of Lie algebras can be found in \cite{Saal96, KapTir}.

\section{GO-nilmanifolds of the centralizer type}\label{sec_4}

In this section, we discuss one special class of geodesic orbit nilmanifolds.
Nilmanifolds from this class have a number of special properties, so there is a chance for their classification (at least in small dimensions).

\begin{definition}\label{de_gonwrt}
We say that a Riemannian GO-nilmanifold $(N,g)$ is {\it of the centralizer type} if in the notation of
Proposition \ref{gonil2} and Remark \ref{re_sub_1},
$\mathcal{V}$
satisfies the transitive normalizer condition with respect to the centralizer $\mathcal{Z}$ of $\mathcal{V}$ in $\mathfrak{so}(n)$, i.~e.
for every $X\in \mathbb{R}^n$ and every $Y\in \mathcal{V}$  there is some $Z \in \mathcal{Z}$ such that
$[Y,Z]=0$ and $Y(X)=Z(X)$.
\end{definition}

\smallskip

It is clear that  the centralizer $\mathcal{Z}$ of $\mathcal{V}$ in $\mathfrak{so}(n)$
is a normal subgroup in the normalizer $\mathcal{N}$ of $\mathcal{V}$ in $\mathfrak{so}(n)$.
Moreover, the quotient group $\mathcal{N}/\mathcal{Z}$ has a faithful representation in $\mathcal{V}$.

\begin{lemma}\label{le_centy_1}
Let $(N,g)$ be a Riemannian GO-nilmanifold $(N,g)$ of the centralizer type,
$\mathcal{A}$ is  a Lie subalgebra in $\mathfrak{so}(n)$ generated by $\mathcal{V}$, $C(\mathcal{A})$ and $S(\mathcal{A})$
are the center and semisimple part of $\mathcal{A}$ respectively. Then the following
assertions hold:

{\rm 1)} $[\mathcal{A},\mathcal{Z}]=0$;

{\rm 2)} $C(\mathcal{A})=\mathcal{A} \bigcap \mathcal{Z} \subset C(\mathcal{Z})$, where $C(\mathcal{Z})$ the center of $\mathcal{Z}$;

{\rm 3)} If $U=U_1+U_2$ for any $U \in \mathcal{V}$, where $U_1\in C(\mathcal{A})$ and $U_2\in S(\mathcal{A})$,
then we have $C(\mathcal{A})=\{U_1\,|\, U\in \mathcal{V}\}$;

{\rm 4)} $\dim (C(\mathcal{A})) \leq \dim  \mathcal{V}$.
\end{lemma}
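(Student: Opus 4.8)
The plan is to establish the four assertions in order, since each relies on the previous ones. For assertion 1), I would argue as follows. The subalgebra $\mathcal{A}$ is generated by $\mathcal{V}$ as a Lie algebra, so $\mathcal{A}$ is spanned by $\mathcal{V}$ together with all iterated brackets $[U_1,[U_2,[\dots,U_k]\dots]]$ with $U_i\in\mathcal{V}$. Since $\mathcal{Z}$ is the centralizer of $\mathcal{V}$ in $\mathfrak{so}(n)$, every $Z\in\mathcal{Z}$ satisfies $[Z,U]=0$ for $U\in\mathcal{V}$, and then the Jacobi identity gives $[Z,[U_1,U_2]]=[[Z,U_1],U_2]+[U_1,[Z,U_2]]=0$; by induction on bracket length, $Z$ commutes with every element of $\mathcal{A}$. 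Hence $[\mathcal{A},\mathcal{Z}]=0$. For assertion 2), note that $\mathcal{A}\cap\mathcal{Z}$ consists of elements of $\mathcal{A}$ that commute with all of $\mathcal{V}$, hence (by the generation property, as just argued) with all of $\mathcal{A}$; this is exactly $C(\mathcal{A})$. Conversely $C(\mathcal{A})$ commutes in particular with $\mathcal{V}\subset\mathcal{A}$, so $C(\mathcal{A})\subset\mathcal{Z}$, giving the equality $C(\mathcal{A})=\mathcal{A}\cap\mathcal{Z}$. Finally, any element of $C(\mathcal{A})=\mathcal{A}\cap\mathcal{Z}$ lies in $\mathcal{Z}$ and, by assertion 1), commutes with all of $\mathcal{A}\supset\mathcal{Z}$, hence lies in $C(\mathcal{Z})$.

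For assertion 3), I would use the standard decomposition of a compact (reductive) Lie algebra $\mathcal{A}=C(\mathcal{A})\oplus S(\mathcal{A})$ into its center and semisimple part, which is a direct sum of ideals; this is legitimate since $\mathcal{A}\subset\mathfrak{so}(n)$ is a compact Lie algebra. Write each $U\in\mathcal{V}$ uniquely as $U=U_1+U_2$ with $U_1\in C(\mathcal{A})$, $U_2\in S(\mathcal{A})$. The set $\{U_1\mid U\in\mathcal{V}\}$ is clearly a linear subspace of $C(\mathcal{A})$, so one inclusion is immediate; I must show it is all of $C(\mathcal{A})$. For this, observe that $\mathcal{A}$ is generated by $\mathcal{V}$, and $C(\mathcal{A})$ projects trivially under any bracket (brackets land in $S(\mathcal{A})$ since $C(\mathcal{A})$ is central and $[\mathcal{A},\mathcal{A}]\subset S(\mathcal{A})$). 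Concretely, let $\pi\colon\mathcal{A}\to C(\mathcal{A})$ be the projection along $S(\mathcal{A})$; it is a Lie algebra homomorphism onto an abelian algebra, so $\pi([\mathcal{A},\mathcal{A}])=0$ and therefore $\pi(\mathcal{A})=\pi(\spann(\mathcal{V}\cup[\mathcal{A},\mathcal{A}]))=\pi(\spann\mathcal{V})=\{U_1\mid U\in\mathcal{V}\}$. But $\pi$ is surjective onto $C(\mathcal{A})$ (it restricts to the identity there), so $C(\mathcal{A})=\{U_1\mid U\in\mathcal{V}\}$. Assertion 4) is then an immediate consequence: the linear map $\mathcal{V}\to C(\mathcal{A})$, $U\mapsto U_1$, is surjective by 3), so $\dim C(\mathcal{A})\le\dim\mathcal{V}$.

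The main point to be careful about is the use of compactness/reductivity of $\mathcal{A}$: since $\mathcal{A}\subset\mathfrak{so}(n)$ and $\mathfrak{so}(n)$ is the Lie algebra of a compact group, any subalgebra is reductive, so the decomposition $\mathcal{A}=C(\mathcal{A})\oplus S(\mathcal{A})$ into the center and the (semisimple) derived-type ideal is available and $[\mathcal{A},\mathcal{A}]\subset S(\mathcal{A})$; I would state this explicitly. Everything else is a routine application of the Jacobi identity and the fact that $\mathcal{A}$ is generated by $\mathcal{V}$, together with the definition of the centralizer $\mathcal{Z}$. I do not anticipate a genuine obstacle; the slightly delicate step is the surjectivity argument in 3), where one must be sure that $\mathcal{A}=\spann\mathcal{V}+[\mathcal{A},\mathcal{A}]$, which again follows from $\mathcal{V}$ generating $\mathcal{A}$ as a Lie algebra.
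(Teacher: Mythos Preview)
Your argument is essentially the same as the paper's: Jacobi-identity induction for 1), the two-way inclusion for 2), and the observation that iterated brackets of elements of $\mathcal{V}$ land in $S(\mathcal{A})$ (equivalently, that the projection $\pi$ onto $C(\mathcal{A})$ kills $[\mathcal{A},\mathcal{A}]$) for 3) and 4). One slip to fix: in the last sentence of your treatment of 2) you write ``commutes with all of $\mathcal{A}\supset\mathcal{Z}$'', but $\mathcal{A}\supset\mathcal{Z}$ is false in general; the correct reasoning (which you have all the pieces for) is that $U\in C(\mathcal{A})\subset\mathcal{A}$ together with assertion 1) gives $[U,\mathcal{Z}]=0$, and since also $U\in\mathcal{Z}$, you get $U\in C(\mathcal{Z})$.
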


\begin{proof}
If $\mathcal{V}^{(1)}=[\mathcal{V},\mathcal{V}]$ and
$\mathcal{V}^{(i+1)}=[\mathcal{V}^{(i)},\mathcal{V}]$ for $i\geq 1$, then $[\mathcal{Z},\mathcal{V}^{(i)}]=0$ for any natural $i$ due to  $[\mathcal{Z},\mathcal{V}]=0$
and $[\mathcal{Z},\mathcal{V}^{(i+1)}]\subset [[\mathcal{Z},\mathcal{V}^{(i)}],\mathcal{V}]+[\mathcal{V}^{(i)},[\mathcal{Z},\mathcal{V}]]$ for $i\geq 1$.
Since $\mathcal{A}$ is generated by $\mathcal{V}$, then $[\mathcal{Z},\mathcal{A}]=0$, that proves the first assertion.

We know that $[\mathcal{A},\mathcal{Z}]=0$, therefore, if $U \in \mathcal{A} \bigcap \mathcal{Z}$, then  $U\in \mathcal{A}$
and $[U,\mathcal{Z}]=0$, hence, $U \in C(\mathcal{Z})$.
Moreover, if $U \in \mathcal{A} \bigcap \mathcal{Z}$, then $U\in \mathcal{Z}$ and $[U,\mathcal{A}]=0$ by the previous assertion. Since $U \in \mathcal{A}$, then
$U \in C(\mathcal{A})$. On the other hand, if $U \in C(\mathcal{A})\subset \mathcal{A}$, then
$[U,\mathcal{V}]\subset[U,\mathcal{A}]=0$, hence $U\in \mathcal{Z}$, the centralizer of
$\mathcal{V}$ in $\mathfrak{so}(n)$. Therefore, $U \in \mathcal{A} \bigcap \mathcal{Z}$. This proves the second assertion.

The third assertions follows from the facts that $\mathcal{A}$ is generated by $\mathcal{V}$ and $\mathcal{V}^{(i)}\in S(\mathcal{A})$ for all $i \geq 1$.
Since the map $U \mapsto U_1$ (see above) from $\mathcal{V}$ to $C(\mathcal{A})$ is linear, we have $\dim (C(\mathcal{A})) \leq \dim  \mathcal{V}$,
that proves the forth assertions.
\end{proof}

\begin{prop}\label{pr_centy_1}
Let $(N,g)$ be a Riemannian GO-nilmanifold $(N,g)$ of the centralizer type,
$\mathcal{A}$ is  a Lie subalgebra in $\mathfrak{so}(n)$ generated by $\mathcal{V}$, $C(\mathcal{A})$ and $S(\mathcal{A})$
are the center and semisimple part of $\mathcal{A}$ respectively, $\pi: \mathcal{V} \rightarrow S(\mathcal{A})$ be a linear projection along $C(\mathcal{A})$.
Then $\mathcal{Z}$ is the centralizer of $\mathcal{V}_s:=\pi(V) \subset S(\mathcal{A})$.
Moreover, $\mathcal{V}_s$ and $\mathcal{Z}$ determine a Riemannian GO-nilmanifold of the centralizer type.

On the other hand, if $\mathcal{V} \subset S(\mathcal{A})$  and $\psi: \mathcal{V} \rightarrow C(\mathcal{Z})$
is any linear map, where $C(\mathcal{Z})$ is the center of $\mathcal{Z}$, then $\mathcal{V}_{\psi}:=\{Z+\psi(Z)\,|\, Z \in \mathcal{V}\}$
and $\mathcal{Z}$ determine a Riemannian GO-nilmanifold of the centralizer type.
\end{prop}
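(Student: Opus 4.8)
The plan is to work throughout with the structure Lemma \ref{le_centy_1} gives us: writing $\mathcal{A}$ for the Lie subalgebra of $\mathfrak{so}(n)$ generated by $\mathcal{V}$, we have the reductive decomposition $\mathcal{A} = C(\mathcal{A}) \oplus S(\mathcal{A})$, the identity $[\mathcal{A},\mathcal{Z}]=0$, and the inclusion $C(\mathcal{A}) \subset C(\mathcal{Z})$. The first statement has two halves. First I would show $\mathcal{Z}$ equals the centralizer $\mathcal{Z}_s$ of $\mathcal{V}_s = \pi(\mathcal{V})$ in $\mathfrak{so}(n)$. The inclusion $\mathcal{Z} \subset \mathcal{Z}_s$ is immediate: if $U$ centralizes $\mathcal{V}$ it centralizes $\mathcal{A}\supset S(\mathcal{A})\supset\mathcal{V}_s$. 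For the reverse, take $U \in \mathcal{Z}_s$; for any $V \in \mathcal{V}$ write $V = V_1 + \pi(V)$ with $V_1 \in C(\mathcal{A}) \subset C(\mathcal{Z})$, so $[U,V] = [U,V_1] + [U,\pi(V)]$; the second bracket vanishes since $U\in\mathcal{Z}_s$, and for the first we must know $[U,V_1]=0$. This needs $U \in \mathcal{Z}$ or at least that $U$ centralizes $C(\mathcal{A})$ — so I would instead argue that $\mathcal{Z}_s$, being the centralizer of $\mathcal{V}_s$, actually already centralizes $C(\mathcal{A})$: since $C(\mathcal{A}) \subseteq C(\mathcal{Z}_s)$ would suffice, and this follows because $C(\mathcal{A})$ lies in $\mathcal{Z}$ (by Lemma \ref{le_centy_1}(2), $C(\mathcal{A}) = \mathcal{A}\cap\mathcal{Z}$) hence centralizes everything $\mathcal{Z}_s\supseteq\mathcal{Z}$ contains... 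The cleanest route: show $C(\mathcal{A})$ commutes with $\mathcal{Z}_s$ directly, using that $\mathcal{Z}_s$ is generated within the reductive pair. I expect this to be the main obstacle — pinning down that enlarging from "centralizes $\mathcal{V}$" to "centralizes $\mathcal{V}_s$" does not actually enlarge the centralizer, because the $C(\mathcal{A})$-part of $\mathcal{V}$ sits inside the center of $\mathcal{Z}$.

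Granting $\mathcal{Z} = \mathcal{Z}_s$, the transitive normalizer condition with respect to $\mathcal{Z}$ transfers to $\mathcal{V}_s$ as follows: given $X \in \mathbb{R}^n$ and $Z_s = \pi(V) \in \mathcal{V}_s$, apply the hypothesis to $V \in \mathcal{V}$ to get $Z \in \mathcal{Z}$ with $[V,Z]=0$ and $V(X)=Z(X)$. Then $[\mathcal{V}_s, \mathcal{Z}] = 0$ already (as $\mathcal{V}_s \subset \mathcal{A}$ and $[\mathcal{A},\mathcal{Z}]=0$), so $[Z_s, Z]=0$ automatically; and since $V_1 := V - Z_s \in C(\mathcal{A}) \subset C(\mathcal{Z})$, I need $V_1(X)$ to be accounted for. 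Here the point is that $V_1 \in C(\mathcal{A}) = \mathcal{A}\cap\mathcal{Z}$, so $V_1$ itself is an element of $\mathcal{Z}$ commuting with everything; replacing $Z$ by $Z - V_1 \in \mathcal{Z}$ gives $[Z_s, Z-V_1]=0$ and $(Z-V_1)(X) = Z(X) - V_1(X) = V(X) - V_1(X) = Z_s(X)$. Thus $\mathcal{V}_s$ satisfies the transitive normalizer condition with respect to $\mathcal{Z}$, which is its centralizer by the first half, so the resulting nilmanifold is of the centralizer type by Definition \ref{de_gonwrt}; Proposition \ref{gonil2} guarantees it is GO.

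For the converse statement, suppose $\mathcal{V} \subset S(\mathcal{A})$ and $\psi: \mathcal{V} \to C(\mathcal{Z})$ is linear; set $\mathcal{V}_\psi = \{Z + \psi(Z) : Z \in \mathcal{V}\}$. First I would check $\mathcal{V}_\psi$ is a linear subspace of $\mathfrak{so}(n)$ of the same dimension as $\mathcal{V}$ (graph of a linear map). Next, its centralizer: since $\psi(Z) \in C(\mathcal{Z})$ and — by Lemma \ref{le_centy_1}(1) applied with the roles understood, or directly because $C(\mathcal{Z}) \subset \mathcal{Z}$ commutes with $S(\mathcal{A})\supseteq\mathcal{V}$ via $[\mathcal{A},\mathcal{Z}]=0$ — we get $[\mathcal{Z}, \mathcal{V}_\psi] \subseteq [\mathcal{Z},\mathcal{V}] + [\mathcal{Z},\psi(\mathcal{V})] = 0$, so $\mathcal{Z}$ is contained in the centralizer of $\mathcal{V}_\psi$; conversely, anything centralizing $\mathcal{V}_\psi$ centralizes $\mathcal{V} = \{V - \psi(V)\}$ modulo the already-central $\psi(\mathcal{V})$, hence centralizes $\mathcal{V}$, giving equality. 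Finally, the transitive normalizer condition with respect to $\mathcal{Z}$: given $X$ and $V + \psi(V) \in \mathcal{V}_\psi$, use the original GO property of the centralizer-type nilmanifold built from $\mathcal{V}$ to get $Z \in \mathcal{Z}$ with $[V,Z]=0$, $V(X)=Z(X)$; then $Z' := Z + \psi(V) \in \mathcal{Z}$ (as $\psi(V) \in C(\mathcal{Z}) \subset \mathcal{Z}$), $[V+\psi(V), Z'] = 0$ since all four terms commute, and $Z'(X) = Z(X) + \psi(V)(X) = V(X) + \psi(V)(X) = (V+\psi(V))(X)$. So $\mathcal{V}_\psi$ satisfies the transitive normalizer condition with respect to its centralizer $\mathcal{Z}$, and Proposition \ref{gonil2} finishes it. The one place demanding genuine care is, again, the computation of centralizers — making sure the graph construction neither shrinks nor enlarges $\mathcal{Z}$ — and verifying $\mathcal{V}_\psi$ still generates a Lie algebra with $\mathcal{Z}$ as honest centralizer rather than a proper subalgebra thereof.
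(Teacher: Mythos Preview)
Your argument for the transitive normalizer condition in both halves --- shifting the witness $X \in \mathcal{Z}$ by the central component $V_1 \in C(\mathcal{A}) \subset \mathcal{Z}$ in the first part, and by $\psi(V) \in C(\mathcal{Z}) \subset \mathcal{Z}$ in the second --- is exactly the paper's proof. That is the entire substantive content of the paper's argument.

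Where you diverge is in attempting the literal centralizer equalities $\mathcal{Z} = \mathcal{Z}_s$ and (for $\mathcal{V}_\psi$) the analogous reverse inclusion. The paper does not do this: it only records $[\mathcal{Z},\mathcal{V}_s] \subset [\mathcal{Z},\mathcal{A}] = 0$ (and similarly $[\mathcal{Z},\mathcal{V}_\psi]=0$), placing $\mathcal{Z}$ \emph{inside} the relevant centralizer, and then verifies the transitive normalizer condition with respect to $\mathcal{Z}$. That already yields ``of the centralizer type'', since the condition only asks for a witness in the centralizer, and any witness in the subalgebra $\mathcal{Z}$ serves.

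Your reverse-inclusion attempts are genuinely stuck, and for good reason: the equality can fail. Take $\mathcal{V}$ one-dimensional; then $\mathcal{A}$ is abelian, $S(\mathcal{A})=0$, $\mathcal{V}_s=0$, and $\mathcal{Z}_s=\mathfrak{so}(n)$, while $\mathcal{Z}$ is the proper centralizer of a nonzero element --- yet this $(N,g)$ is of centralizer type (take $X=Z$). Likewise, in the $\mathcal{V}_\psi$ direction, from $[U,V+\psi(V)]=0$ you only get $[U,V]=-[U,\psi(V)]$, and $\psi(V)\in C(\mathcal{Z})$ tells you nothing about $[U,\psi(V)]$ for $U \notin \mathcal{Z}$. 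So the clause ``$\mathcal{Z}$ is the centralizer of $\mathcal{V}_s$'' in the statement should be read as ``$\mathcal{Z}$ centralizes $\mathcal{V}_s$''. Drop the reverse-inclusion paragraphs and your proof coincides with the paper's.
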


\begin{proof}
It is clear that $S(\mathcal{A})$ is generated by $\mathcal{V}_s$ and $[\mathcal{Z},\mathcal{V}_s]\subset [\mathcal{Z},\mathcal{A}]=0$, whereas
$\mathcal{A} \bigcap \mathcal{Z}=C(\mathcal{A})=\{Z-\pi(Z)\,|\, Z \in \mathcal{V}\}$ by Lemma \ref{le_centy_1}.
Since $(N,g)$ is a Riemannian GO-nilmanifold $(N,g)$ of the centralizer type,
for every $Y\in \mathbb{R}^n$ and every $Z\in \mathcal{V}$  there is some $X \in \mathcal{Z}$ such that
$[X,Z]=0$ and $X(Y)=Z(Y)$.
If $Z=Z_1+Z_2$, where $Z_1\in C(\mathcal{A})$ and $Z_2=\psi(Z)\in \mathcal{V}_s \subset S(\mathcal{A})$.
Since $C(\mathcal{A})=\mathcal{A} \bigcap \mathcal{Z} \subset C(\mathcal{Z})$ we can consider $X'=X-Z_1\in \mathcal{Z}$.
It is easy to see that $[X',\pi(Z)]=[X-Z_1,Z_2]=0$ and $X'(Y)=X(Y)-Z_1(Y)=Z(Y)-Z_1(Y)=Z_2(Y)$.
This proves that $\mathcal{V}_s$ and $\mathcal{Z}$ determine a Riemannian GO-nilmanifold of the centralizer type.
Note that $\mathcal{V} \subset S(\mathcal{A})$ implies that $\pi$ is the identity mapping.

Now, if $\mathcal{V} \subset S(\mathcal{A})$, for any $Z\in  \mathcal{V}$ we consider $Z':=Z+\psi(Z)\in \mathcal{V}_{\psi}$.
Since $(N,g)$ is a Riemannian GO-nilmanifold, then for every $Y\in \mathbb{R}^n$ and every $Z\in \mathcal{V}$  there is some $X \in \mathcal{Z}$ such that
$[X,Z]=0$ and $X(Y)=Z(Y)$. Let us consider $X':=X+\psi(Z)$, then $[X',Z']=[X+\psi(Z), Z+\psi(Z)]=0$ and $X'(Y)=Z'(Y)$ for a given $Y\in \mathbb{R}^n$.
This proves that $\mathcal{V}_{\psi}$ and $\mathcal{Z}$ determine a Riemannian GO-nilmanifold of the centralizer type.
\end{proof}

\begin{remark}
Instead of the centralizer $\mathcal{Z}$ in Proposition \ref{pr_centy_1}, one can take any subalgebra $\mathcal{Z}'$ of
$\mathcal{Z}$ with the following property: For every $Y\in \mathbb{R}^n$ and every $Z\in \mathcal{V}$  there is some $X \in \mathcal{Z}'$ such that
$[X,Z]=0$ and $X(Y)=Z(Y)$. The above proposition is interesting only in the case when the center of $\mathcal{Z}'$ is not trivial.
\end{remark}

\begin{remark}
Proposition \ref{pr_centy_1} implies the following method to construct GO-nilmanifolds of the centralizer type.
Suppose that there is a direct sum of Lie algebras $\mathcal{A}\oplus \mathcal{B}$ in  $\mathfrak{so}(n)$, such that
$\mathcal{A}$ is a semisimple Lie algebra and the Lie subgroups $G_1 \subset G_2 \subset SO(n)$ with the Lie algebras
$\mathcal{B}\subset \mathcal{A}\oplus \mathcal{B}\subset \mathfrak{so}(n)$
are such that any orbit of $G_2$  in $\mathbb{R}^n$ is also an orbit of $G_1$.
Then any linear subspace $\mathcal{V} \subset \mathcal{A}$ satisfies
the transitive normalizer condition with respect to the centralizer $\mathcal{Z}$ of $\mathcal{V}$ in $\mathfrak{so}(n)$.
A partial case of this construction (when $G_1$ acts transitively on the unit sphere in $\mathbb{R}^n$) is considered in Section 3.
\end{remark}

It is clear that the property to be of {\it the centralizer type} is rather restrictive for GO-nilmanifolds. On the other hand, we have

\begin{prop}\label{pr_centy_2}
If a Riemannian GO-nilmanifold $(N,g)$ is such that
$\dim(\mathcal{V})=2$ in the notation of
Proposition \ref{gonil2}, then it is of the centralizer type.
\end{prop}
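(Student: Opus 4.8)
The plan is to show that, when $\dim(\mathcal{V})=2$, the normalizer $\mathcal{N}$ of $\mathcal{V}$ in $\mathfrak{so}(n)$ exceeds its centralizer $\mathcal{Z}$ by at most one dimension, and then to check that this one extra direction is never actually needed in the transitive normalizer condition, so that $\mathcal{Z}$ already suffices.

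First I would fix the data. Since $(N,g)$ is a GO-nilmanifold, in the notation of Proposition \ref{gonil2} the $2$-dimensional subspace $\mathcal{V}\subset\mathfrak{so}(n)$ satisfies the transitive normalizer condition with respect to its normalizer $\mathcal{N}$, and clearly $\mathcal{Z}\subseteq\mathcal{N}$. Bracketing gives a linear map $\rho\colon\mathcal{N}\to\mathfrak{gl}(\mathcal{V})$, $\rho(X)=\ad(X)|_{\mathcal{V}}$, whose kernel is exactly $\mathcal{Z}$, so that $\mathcal{N}/\mathcal{Z}\cong\rho(\mathcal{N})$. The crux is the bound $\dim\rho(\mathcal{N})\le 1$: the $\ad(\mathfrak{so}(n))$-invariant inner product $B(X,Y)=-\trace(XY)$ restricts to an inner product on $\mathcal{V}$, and since every $X\in\mathcal{N}$ preserves $\mathcal{V}$ while $B([X,U],U')+B(U,[X,U'])=0$ for all $U,U'\in\mathcal{V}$, the operator $\rho(X)$ is skew-symmetric for $B|_{\mathcal{V}}$. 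Hence $\rho(\mathcal{N})\subseteq\mathfrak{so}(\mathcal{V},B|_{\mathcal{V}})\cong\mathfrak{so}(2)$, which is one-dimensional. If $\rho(\mathcal{N})=0$ then $\mathcal{N}=\mathcal{Z}$ and there is nothing to prove; otherwise I choose $X_0\in\mathcal{N}$ with $\rho(X_0)\neq 0$, so that $\mathcal{N}=\mathcal{Z}\oplus\mathbb{R}X_0$, and I observe that a nonzero skew-symmetric operator on the $2$-dimensional space $\mathcal{V}$ is invertible, whence $[X_0,U]\neq 0$ for every nonzero $U\in\mathcal{V}$.

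Finally I would verify the centralizer condition directly from Definition \ref{de_gonwrt}. Fix $Y\in\mathbb{R}^n$ and $Z\in\mathcal{V}$; if $Z=0$ take $X=0\in\mathcal{Z}$. If $Z\neq 0$, the transitive normalizer condition supplies $X\in\mathcal{N}$ with $[X,Z]=0$ and $X(Y)=Z(Y)$; write $X=X'+cX_0$ with $X'\in\mathcal{Z}$ and $c\in\mathbb{R}$. Since $[X',Z]=0$ we get $0=[X,Z]=c\,[X_0,Z]$, and $[X_0,Z]\neq 0$ forces $c=0$, so $X=X'\in\mathcal{Z}$ already satisfies $[X',Z]=0$ and $X'(Y)=Z(Y)$. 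Thus $\mathcal{V}$ satisfies the transitive normalizer condition with respect to $\mathcal{Z}$, i.e.\ $(N,g)$ is of the centralizer type. I do not expect a genuine obstacle here; the whole argument hinges on the dimension count $\dim(\mathcal{N}/\mathcal{Z})\le 1$, and that is precisely where the hypothesis $\dim(\mathcal{V})=2$ is essential (it is the reason the statement does not extend verbatim to larger $\mathcal{V}$).
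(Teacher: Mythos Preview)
Your proof is correct and uses essentially the same idea as the paper's: both arguments rest on the fact that $\ad(X)|_{\mathcal{V}}$ is skew-symmetric for the trace form, so that in dimension $2$ any $X\in\mathcal{N}$ with $[X,Z]=0$ for some nonzero $Z\in\mathcal{V}$ must in fact centralize all of $\mathcal{V}$. The paper reaches this conclusion more directly---observing that the $\ad(X)$-invariant orthogonal complement of $\mathbb{R}\cdot Z$ in $\mathcal{V}$ is $1$-dimensional, hence killed by the skew-symmetric $\ad(X)$---whereas you pass through the structural statement $\dim(\mathcal{N}/\mathcal{Z})\le 1$ and a choice of $X_0$; but the content is the same.
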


\begin{proof}
Since $(N,g)$ is a GO-nilmanifold, then (by Proposition \ref{gonil2})
for every $Y\in \mathbb{R}^n$ and every $Z\in \mathcal{V}$  there is some $X \in \mathcal{N}$ such that
$[X,Z]=0$ and $X(Y)=Z(Y)$. If $Z$ is non-trivial, then the inclusion $[X,\mathcal{V}]\subset \mathcal{V}$ and the equality $[X,Z]=0$
imply, that $[X,\mathcal{V}]=0$, hence,  $X \in \mathcal{Z}$, where  $\mathcal{Z}$ is the centralizer of $\mathcal{V}$ in $\mathfrak{so}(n)$.
Indeed, the orthogonal compliment to $\mathbb{R}\cdot Z$ in $\mathcal{V}$ is $\ad(X)$-invariant and $1$-dimensional,
hence $X$ acts trivially on it.
\end{proof}
\smallskip

Finally, we consider some
restrictions on spectral properties of elements of $\mathcal{V}$ and $\mathcal{N}$ (or $\mathcal{Z}$) as in Proposition \ref{gonil2}.
These properties will be useful in classifying low-dimensional GO-nilmanifolds.

\begin{prop}\label{pr_centy_3}
Suppose that $U, V \in \mathfrak{so}(n)$ with $[U,V]=0$ and consider the subspace
$$
L_{U,V}=\{Z\in \mathbb{R}^n \,|\, U(Z)=V(Z)\}.
$$
Then $L_{U,V}$ is an an invariant subspace both for $U$ and $V$.
Moreover, if $Z\in L_{U,V}$ and $Z=Z_0+Z_1+\cdots + Z_p$, where  $Z_i$ is an eigenvector of $U^2$ with eigenvalues $\alpha_i^2$, where
$0=|\alpha_0|<|\alpha_1|<\cdots < |\alpha_p|$,
then $Z_i \in  L_{U,V}$ for all $i=0,1,\dots, p$.
In this case, every $Z_i$ is also an eigenvector of $V^2$ with the eigenvalue $\alpha_i^2$ and $U(Z_i)=V(Z_i)$ is an eigenvector both for $V^2$ and $U^2$
with the same eigenvalue $\alpha_i^2$.
\end{prop}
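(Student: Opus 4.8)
The plan is to derive all three assertions from the single hypothesis $[U,V]=0$ together with the fact that $U^{2}$ is a symmetric operator, so that $\mathbb{R}^{n}$ splits as an orthogonal direct sum of eigenspaces of $U^{2}$; throughout I use the sign convention of the statement, in which a vector $Z_{i}$ with $U^{2}(Z_{i})=-\alpha_{i}^{2}Z_{i}$ is called an eigenvector of $U^{2}$ ``with eigenvalue $\alpha_{i}^{2}$'' (for real skew-symmetric $U$ these genuine eigenvalues are $\le 0$). For the invariance of $L_{U,V}=\Ker(U-V)$: if $Z\in L_{U,V}$, then $U(Z)=V(Z)$ and hence $(U-V)\bigl(U(Z)\bigr)=U^{2}(Z)-V\bigl(U(Z)\bigr)=U^{2}(Z)-U\bigl(V(Z)\bigr)=U^{2}(Z)-U\bigl(U(Z)\bigr)=0$, using $[U,V]=0$; thus $U(Z)\in L_{U,V}$, and since $V(Z)=U(Z)$ also $V(Z)\in L_{U,V}$.

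For the splitting assertion, I would observe that $[U,V]=0$ forces $[U^{2},V]=0$, so each (symmetric) eigenspace $E_{i}:=\Ker(U^{2}+\alpha_{i}^{2}\Id)$ of $U^{2}$ is invariant under $V$, and it is trivially invariant under $U$. Given $Z\in L_{U,V}$ with its $U^{2}$-eigencomponent decomposition $Z=Z_{0}+Z_{1}+\dots +Z_{p}$, $Z_{i}\in E_{i}$, the identities $U(Z)=\sum_{i}U(Z_{i})$ and $V(Z)=\sum_{i}V(Z_{i})$ are then exactly the $U^{2}$-eigencomponent decompositions of $U(Z)$ and of $V(Z)$. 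Comparing components in the equality $U(Z)=V(Z)$ yields $U(Z_{i})=V(Z_{i})$ for every $i$, i.e. $Z_{i}\in L_{U,V}$.

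For the spectral statement I would use the equality $U(Z_{i})=V(Z_{i})$ just obtained. Then, again by $[U,V]=0$, one gets $V^{2}(Z_{i})=V\bigl(V(Z_{i})\bigr)=V\bigl(U(Z_{i})\bigr)=U\bigl(V(Z_{i})\bigr)=U\bigl(U(Z_{i})\bigr)=U^{2}(Z_{i})=-\alpha_{i}^{2}Z_{i}$, so $Z_{i}$ is also an eigenvector of $V^{2}$ with eigenvalue $\alpha_{i}^{2}$. Moreover, since $[U,U^{2}]=0$ and $[V,V^{2}]=0$, applying $U$ (respectively $V$) to $U^{2}(Z_{i})=-\alpha_{i}^{2}Z_{i}$ (respectively to $V^{2}(Z_{i})=-\alpha_{i}^{2}Z_{i}$) shows that the common vector $U(Z_{i})=V(Z_{i})$ is an eigenvector of both $U^{2}$ and $V^{2}$ with eigenvalue $\alpha_{i}^{2}$.

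I expect no genuine obstacle: the whole proposition reduces to the elementary fact that commuting linear operators preserve one another's eigenspaces. The only points that need a little care are the bookkeeping of which subspaces are simultaneously preserved by $U$, $V$, $U^{2}$ and $V^{2}$ --- in particular the use of the orthogonal eigenspace decomposition of $U^{2}$, which relies on $U$ being real and skew-symmetric --- and keeping the sign convention for the eigenvalues consistent throughout.
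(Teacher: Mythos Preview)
Your argument is correct. The first assertion (invariance of $L_{U,V}$) and the third assertion (the spectral properties of the $Z_i$) are handled essentially as in the paper. The genuine difference is in the second assertion, the splitting $Z_i\in L_{U,V}$. The paper proceeds by iterating $L:=U^{2}$ on $Z$: setting $F_0:=Z$ and $F_{i+1}:=L(F_i)$, invariance gives $F_i\in L_{U,V}$, while $F_i=\sum_{j}(-1)^{i}|\alpha_j|^{2i}Z_j$; the Vandermonde determinant in the $|\alpha_j|^{2}$ then expresses each $Z_j$ as a linear combination of the $F_i$, hence $Z_j\in L_{U,V}$. You instead use that $[U^{2},V]=0$ forces $V$ to preserve each eigenspace $E_i$ of $U^{2}$, so the equality $U(Z)=V(Z)$ can be read off component by component. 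Your route is shorter and more conceptual, and it explains \emph{why} the Vandermonde system is solvable (the distinct eigenvalues are exactly what makes the eigenspace decomposition unique); the paper's route, on the other hand, avoids invoking the invariance of eigenspaces under commuting operators and stays entirely inside $L_{U,V}$ from the outset.
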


\begin{proof}
Take any $Z\in L_{U,V}$. We have to show that $U(Z) \in  L_{U,V}$ and $V(Z) \in  L_{U,V}$.
By the definition of $L_{U,V}$, we have $U(Z)=V(Z)=:Z'$. Since $[U,V]=0$, we have $0=[U,V](Z)=U(V(Z))-V(U(Z))=U(Z')-V(Z')$, hence, $Z' \in L_{U,V}$.
This proves the first assertion.

It is clear that the operator $L:=U^2=U\circ U$ is symmetric on $\mathbb{R}^n$.
Note, that $\alpha^2_i \in \mathbb{R}$ for all $i$ and $\alpha^2_i<0$ for $i \geq 1$ (since $U$ is skew-symmetric).
Put $F_0:=Z$ and $F_{i+1}:=L(F_i)$ for $i=1,\dots, p-1$.
By the first assertion, we get $F_i \in L_{U,V}$ for $i=0,1,\dots, p$. On the other hand, we have $F_i=(-1)^i \sum_{i=0}^p |\alpha_i|^{2i} Z_i$, $i=0,1,\dots, p$.
We get a system of $p+1$ linear equations with respect to $Z_0, Z_1, \dots, Z_p$ with non-zero determinant (it is the Vandermonde determinant),
hence, solving it, we represent every $Z_i$ as a linear combination of the vectors $F_j$, $j=0,1,\dots,p$. Since all this vectors are from
$L_{U,V}$, then $Z_i \in  L_{U,V}$ for all $i=0,1,\dots, p$.

Since $U(Z_i)=V(Z_i)$, $U(U(Z_i))=V(V(Z_i))=\alpha_i^2 Z_i$ and $U(U(U(Z_i)))=\alpha_i^2 U(Z_i)=\alpha_i^2 V(Z_i)=V(V(V(Z_i)))$,
then $Z_i$ is also an eigenvector of $V^2$ with the eigenvalue $\alpha_i^2$,
$U(Z_i)=V(Z_i)$ is an eigenvector for $V^2$ and $U^2$
with the eigenvalue $\alpha_i^2$.
The proposition is proved.
\end{proof}

\begin{lemma}\label{le_centy_3}
Suppose that $U, V, W \in \mathfrak{so}(n)$ and $[U,V]=[U,W]=0$.
If the matrix $U$ has no non-zero eigenvalue of multiplicity $\geq 2$ and $0$ is an eigenvalue of $U$ of multiplicity $\leq 2$, then $[V,W]=0$.
\end{lemma}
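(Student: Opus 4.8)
The plan is to split $\mathbb{R}^n$ into the common invariant subspaces dictated by the spectral structure of $U$ and to observe that on each of them the space of skew-symmetric operators is abelian, so that $V$ and $W$ automatically commute there. First I would use that, $U$ being real and skew-symmetric, $U^2$ is symmetric and negative semidefinite; hence $\mathbb{R}^n$ decomposes as an orthogonal direct sum $\mathbb{R}^n=\ker U\oplus E_1\oplus\cdots\oplus E_p$, where $E_j$ is the $(-\alpha_j^2)$-eigenspace of $U^2$ and $0<\alpha_1<\cdots<\alpha_p$. Since the complex eigenvalues of $U$ are $0$ together with the conjugate pairs $\pm i\alpha_j$, and $\dim E_j$ equals twice the (complex) multiplicity of $i\alpha_j$, the assumption that no non-zero eigenvalue of $U$ has multiplicity $\geq 2$ forces $\dim E_j=2$ for every $j$ (in particular the $\alpha_j$ are pairwise distinct), while the assumption on the eigenvalue $0$ gives $\dim\ker U\leq 2$.

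Next I would exploit the commutation relations: since $[U,V]=[U,W]=0$, both $V$ and $W$ commute with $U^2$ and therefore leave every eigenspace of $U^2$ invariant, i.e. they preserve $\ker U$ and each $E_j$. A skew-symmetric operator that preserves a subspace also preserves its orthogonal complement and restricts to a skew-symmetric operator there; so $V$ and $W$ restrict to elements of $\mathfrak{so}(\ker U)$ and of $\mathfrak{so}(E_j)$ for every $j$. This is the same kind of block reduction already employed in Proposition~\ref{pr_centy_3}.

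Finally, every summand of the decomposition has dimension at most $2$, and the Lie algebra of skew-symmetric operators on a space of dimension $\leq 2$ is either zero or one-dimensional, hence abelian. Therefore $[V,W]$ vanishes on $\ker U$ and on each $E_j$, so it vanishes on all of $\mathbb{R}^n$, which is the claim $[V,W]=0$. I expect the only slightly delicate point to be the translation of the hypothesis ``no non-zero eigenvalue of multiplicity $\geq 2$'' into ``each $E_j$ is a single rotation plane of dimension $2$'': one must remember that the $\alpha_j^2$-eigenspace of $-U^2$ has dimension twice the complex multiplicity of the eigenvalue $i\alpha_j$ of $U$, so that multiplicity one there is exactly what makes the block two-dimensional. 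The remaining steps are routine linear algebra.
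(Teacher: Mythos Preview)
Your argument is correct and is essentially the same reduction as the paper's: decompose $\mathbb{R}^n$ into the two-dimensional ``rotation planes'' of $U$ (plus the at-most-two-dimensional kernel), observe that anything commuting with $U$ preserves each block, and conclude because $\mathfrak{so}(k)$ is abelian for $k\le 2$.

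The packaging differs slightly. The paper quotes a simultaneous normal-form theorem for commuting skew-symmetric matrices (Gantmaher) once for the pair $(U,V)$ and once for $(U,W)$, obtaining two block-diagonal forms in two \emph{a priori} different bases, and then uses the simplicity of the non-zero eigenvalues to argue that the two bases can be matched up so that $V$ and $W$ share the same $2\times 2$ block structure. Your route avoids this matching step by working directly with the eigenspaces of $U^{2}$, which are canonical and are preserved by anything commuting with $U$; this yields the common block decomposition for $V$ and $W$ in one stroke. Your version is a bit more streamlined, while the paper's has the virtue of citing a standard reference; mathematically they are the same proof.
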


\begin{proof}
Since $[U,V]=0$ then there is an orthonormal basis $(e_i)_{1\leq i \leq n}$ in $\mathbb{R}^n$,
in which
$$
U=\diag(U_1,\cdots, U_p,0,\cdots,0),\quad V=\diag(V_1,\cdots, V_p,0,\cdots,0),
$$
for some $p\leq n/2$, where $U_i$ and $V_i$ are skew-symmetric ($2\times2$)-matrices, $p\leq n/2$.
see \cite[Chapter 9, \S 15, Theorem 12]{Gant}.
Since $[U,W]=0$ then there is an orthonormal basis $(f_i)_{1\leq i \leq n}$ in $\mathbb{R}^n$,
in which
$$
U=\diag(U'_1,\cdots, U'_q,0,\cdots,0),\quad W=\diag(W_1,\cdots, W_q,0,\cdots,0),
$$
for some $q\leq n/2$, where $U'_i$ and $W_i$ are skew-symmetric ($2\times2$)-matrices.
Since the matrix $U$ has no non-zero eigenvalue of multiplicity $\geq 2$ and $0$ is an eigenvalue of $U$ of multiplicity $\leq 2$,
then after some permutation of vectors in the basis $(f_i)_{1\leq i \leq n}$ we get that $U'_i=U_i$ for all $1\leq i \leq \min\{p,q\}$.
These ($2\times2$)-matrices corresponds to pairs of complex conjugate eigenvalues of $U$, while all other eigenvalues are zero
(the eigenvalue $0$ is of multiplicity $0$, $1$ or $2$). Therefore,
any $2$-dimensional space $\Lin (e_{2k+1}, e_{2k+2})$, $k=0,\dots,\min\{p,q\}-1$, is eigenspace both for $V$ and $W$. The orthogonal compliment in $\mathbb{R}^n$
to the the direct sum of all such spaces is also an eigenspace both for $V$ and $W$, and has dimension $\leq 2$ (since $0$ is an eigenvalue of $U$ of multiplicity $\leq 2$).
It is clear that in all above 2-dimensional and (possible) $1$-dimensional eigenspaces
$V$ commutes with $W$. Hence, $[V,W]=0$.

Another proof of this lemma can be obtained from the structure of adjoint orbits of the Lie algebra $\mathfrak{so}(n)$, see e.~g. \cite[Chapter 8, 8.113--8.115]{Bes}.
\end{proof}

\section{The proofs of the main results}\label{sec_5}

According to \cite[Corollary 4.9]{LT99},
there are two non-isomorphic $10$-dimensional non-singular Lie algebras with  Pfaffian form $(x^2+y^2)^2$
and a $1$-parameter family (with respect to a real parameter $t>1$) of pairwise non-isomorphic $10$-dimensional non-singular Lie algebras with the Pfaffian forms
$(x^2+y^2)(t^2\cdot x^2+y^2)$, $t>1$. Here we assume that $Z=x\cdot Z_1 +y \cdot Z_2$, where the vectors $Z_1, Z_2$ form an orthonormal basis in $\mathfrak{z}$.
We denote the above $10$-dimensional non-singular Lie algebra by $\mathfrak{n}_{10,1}$, $\mathfrak{n}_{10,2}$, and $\mathfrak{n}_{10,t}$, $t>1$, respectively.

The first of theses $10$-dimensional non-singular Lie algebras are defined by the operator
$J_Z=\diag (C_1,C_1)$, while the last family of Lie algebras is defined by the operators
$J_Z=\diag(C_1,C_2)$, where

\begin{equation}\label{eq_c1c2}
C_1=
\left(%
\begin{array}{cccc}
  0 & 0 & -x & y \\
  0 & 0 & y & x \\
  x & -y & 0 & 0 \\
  -y & -x & 0 & 0 \\
\end{array}%
\right), \quad
C_2=
\left(%
\begin{array}{cccc}
  0 & 0 & -t\cdot x & y \\
  0 & 0 & y & t\cdot x \\
  t\cdot x & -y & 0 & 0 \\
  -y & -t\cdot x & 0 & 0 \\
\end{array}%
\right).
\end{equation}

The second $10$-dimensional non-singular Lie algebras is defined by the operator
$J_Z=\left(%
\begin{array}{cc}
  0 & A \\
  -A^{\prime} & 0\\
\end{array}%
\right)$
on $\mathbb{R}^8$ (the symbol $A^{\prime}$ means the transpose matrix $A$), where
$$
A=\left(%
\begin{array}{cccc}
  0 & 0 & -x & y \\
  0 & 0 & y & x \\
  -x & y & 0 & x \\
  y & x & x & 0 \\
\end{array}%
\right).
$$

We see that the first Lie algebra is an $H$-type algebra and could be considered as member of the latter family of Lie algebras for $t=1$.
It is clear that $\mathcal{V}=\{J_{Z}\,|\, Z=x\cdot Z_1 +y \cdot Z_2 \in \mathfrak{z}\}$ is two-dimensional linear subspace in
$\mathfrak{so}(8)=\mathfrak{so}(\mathfrak{v})$.
These constructions are very useful in the proof of our first main result.
\smallskip

\begin{proof}[Proof of Theorem \ref{tm_1}]
For any $t\geq 1$ we consider $N_t$, a connected and simply connected nilpotent Lie group with the Lie algebra $\mathfrak{n}_{10,t}$, defined above.
Recall that for $t=1$, we obtain an $H$-type Lie group.

The non-isomorphism
within the family  $\mathfrak{n}_{10,t}$, $t>1$, follows by using the fact that the hyperbolic distance between $\sqrt{-1}$
and $t\cdot \sqrt{-1}$ on the complex plane strictly increases with $t$ (see details in \cite[Corollary 4.9]{LT99}),
or alternatively, from the non-equivalence between
their Pfaffian forms.

Now, consider a linear subspace $\mathcal{V}_t$ in $\mathfrak{so}(8)$, that consists of matrices of type
$J_Z=\diag(C_1,C_2)$, where
$C_1$ and $C_2$ are defined in
\eqref{eq_c1c2}.
The centralizer $\mathcal{Z}$ of $\mathcal{V}_t$ in $\mathfrak{so}(8)$ is a $6$-parameter
Lie subalgebra of $\mathfrak{so}(8)$, which consists of the matrices of the form
$Y=Y(\alpha_1,\alpha_2,\alpha_3,\alpha_4,\alpha_5,\alpha_6)=\diag(D_1,D_2)$, where
\begin{equation}\label{eq_10dim_1}
D_1=\left(
\begin{array}{cccc}
0 & -\alpha_1 & -\alpha_2& -\alpha_3\\
\alpha_1 & 0&  \alpha_3& -\alpha_2\\
\alpha_2& -\alpha_3& 0& \alpha_1\\
\alpha_3&\alpha_2&-\alpha_1&0\\
\end{array}%
\right), \quad
D_2=\left(
\begin{array}{cccc}
0& -\alpha_4& -\alpha_5& -\alpha_6\\
\alpha_4& 0& \alpha_6& -\alpha_5\\
\alpha_5& -\alpha_6& 0& \alpha_4\\
\alpha_6& \alpha_5& -\alpha_4& 0\\
\end{array}%
\right).
\end{equation}

Let us prove that $\mathcal{V}_t$ satisfies the transitive normalizer condition.

For any $X=(x_1,x_2,x_3,x_4,x_5,x_6,x_7,x_8)\in \mathbb{R}^8$
and any $Z \in \mathcal{V}_t$, it suffices to find a suitable $Y\in \mathcal{Z}$ such that $Y(X)=Z(X)$ (we know that $[Y,Z]=0$).
The latter equation is a system of $8$ linear equations with respect to variables $\alpha_i$, $1\leq i \leq 6$.
Nevertheless, this system has a solution for every $t,x,y$, and $x_i$, $1\leq i\leq 8$. It can be checked by straightforward computation.
We present the corresponding solutions in explicit form.

If $x_1^2+x_2^2+x_3^2+x_4^2 = 0$, then we can take any $\alpha_1, \alpha_2, \alpha_3 \in \mathbb{R}$.
If $x_1^2+x_2^2+x_3^2+x_4^2 \neq 0$, then we have
\begin{eqnarray*}
\alpha_1&=& \frac{2x(x_1x_4+x_2x_3) +2 y(x_1x_3-x_2x_4)}{x_1^2+x_2^2+x_3^2+x_4^2}, \\
\alpha_2&=& \frac{x(x_1^2 - x_2^2 +x_3^2 - x_4^2)-2y(x_1x_2+x_3x_4)}{x_1^2+x_2^2+x_3^2+x_4^2}, \\
\alpha_3&=& \frac{-2x(x_1x_2-x_3x_4)-y(x_1^2 - x_2^2 -x_3^2 + x_4^2)}{x_1^2+x_2^2+x_3^2+x_4^2}.
\end{eqnarray*}

Analogously, if $x_5=x_6=x_7=x_8=0$, then we can take any $\alpha_4, \alpha_5, \alpha_6 \in \mathbb{R}$.
In the case $x_5^2+x_6^2+x_7^2+x_8^2 \neq 0$, we have
\begin{eqnarray*}
\alpha_4&=& \frac{2tx(x_5x_8+x_6x_7) + 2y(x_5x_7-x_6x_8)}{x_5^2+x_6^2+x_7^2+x_8^2}, \\
\alpha_5&=& \frac{tx(x_5^2 - x_6^2 +x_7^2 - x_8^2) - 2y(x_5x_6+x_7x_8)}{x_5^2+x_6^2+x_7^2+x_8^2}, \\
\alpha_6&=& \frac{-2tx(x_5x_6+x_7x_8) - y(x_5^2-x_6^2-x_7^2+x_8^2)}{x_5^2+x_6^2+x_7^2+x_8^2}.
\end{eqnarray*}

According to Remark \ref{re_sub_1}, we can take $\mathcal{Z}$ as $\mathcal{N}'\subset \mathcal{N}$ in order to apply Proposition
\ref{gonil2}.
Since $[\mathcal{Z},\mathcal{V}_t]=0$ then any inner product on $\mathcal{V}_t$ is $\ad(\mathcal{Z})$-invariant.
Hence, there is a 3-parameter family of suitable inner product on $\mathcal{V}_t$ (since $\dim (\mathcal{V}_t)=2$).
Therefore, the theorem is completely proved.
\end{proof}
\medskip

Let us consider a more conceptual approach to proving of Theorem \ref{tm_1}.
Recall that the matrices
$\mathcal{V}_t$ have the form $\diag(C_1,C_2)$,   where the matrices $C_1,C_2$ are from \eqref{eq_c1c2}.
Moreover, $Y=Y(\alpha_1,\alpha_2,\alpha_3,\alpha_4,\alpha_5,\alpha_6)=\diag(D_1,D_2)$, where $D_1,D_2$ are from \eqref{eq_10dim_1}.

We see that the matrices $C_1$ and $C_2$ are from the first summand $\mathfrak{so}(3)$ in the decomposition
$\mathfrak{so}(4)=\mathfrak{so}(3) \oplus \mathfrak{so}(3)$ as in the proof of Lemma \ref{le_leri_mult_1}, while the matrices $D_1$ and $D_2$ are from the second one.
Hence, we see that $[\mathcal{V}_t, \mathcal{Z}]=0$.
Obviously, the set of all possible matrices $D_1$ (as well as the set of all possible matrices $D_2$) coincide with the set of matrices
$R(\gamma_1,\gamma_2,\gamma_3)$, that form a the second summand $\mathfrak{so}(3)$
in the above decomposition of $\mathfrak{so}(4)$.

Now we want to prove that for any $X\in \mathbb{R}^8$ and any $Y=(C_1,C_2)\in \mathcal{V}_t$ we can find $Z=(D_1,D_2)\in \mathcal{Z}$
such that $Z(X)=Y(X)$.
The main observation here is that the problem naturally  breaks down into two simpler problems.
Indeed, if $X=(X_1,X_2)$, where $X_1,X_2\in \mathbb{R}^4$, then $Z(X)=Y(X)$ is equivalent to two equalities: $D_1(X_1)=C_1(X_1)$ and $D_2(X_2)=C_2(X_2)$.

As we explained in the proof of Lemma \ref{le_leri_mult_1}, the vector $C_1(X_1)$ is in the tangent plane to the sphere  of radius $\sqrt{(X_1,X_1)}$  in $\mathbb{R}^4$
(centered at the origin),
and there is a matrix $D_1=R(\gamma_1,\gamma_2,\gamma_3)$ such that $D_1(X_1)=C_1(X_1)$. By the same reason, there is a matrix
$D_2=R(\gamma_1,\gamma_2,\gamma_3)$ such that $D_2(X_2)=C_2(X_2)$.
This proves the main assertions of Theorem \ref{tm_1} without explicit formulas.
\medskip

The above arguments lead also to the proof of our second main result.
\smallskip

\begin{proof}[Proof of Theorem \ref{tm_2}]
Let us define a family of two-dimensional subspaces in $\mathfrak{so}(4k+4)$ as follows. At first we fix pairwise distinct real numbers
$1=t_0<t_1<t_2<\cdots <t_k$ and define the matrices
$$
C_j=
\left(%
\begin{array}{cccc}
  0 & 0 & -t_j\cdot x & y \\
  0 & 0 & y & t_j\cdot x \\
  t_j\cdot x & -y & 0 & 0 \\
  -y & -t_j\cdot x & 0 & 0 \\
\end{array}%
\right), \qquad x,y \in \mathbb{R}, \quad 0\leq j \leq k.
$$
Now, we define
$$
\mathcal{V}_{t_1,t_2,\dots,t_k}=\diag(C_0,C_1,C_2,\cdots,C_k).
$$
It is clear that $\mathcal{V}_{t_1,t_2,\dots,t_k}$ forms a two-dimensional subspace in $\mathfrak{so}(4k+4)$ ($x,y \in \mathbb{R}$ are arbitrary)
for any fixed $k$-tuple  $\{t_i$\}, $1\leq i \leq k$.

The Pfaffian forms for the corresponding nilpotent Lie algebras $\mathfrak{n}_{\,t_1,t_2,\dots,t_k}$ with operators
$J_Z=\mathcal{V}_{t_1,t_2,\dots,t_k}\subset \mathfrak{so}(4k+4)$
are
$$
(x^2+y^2)(t_1^2\cdot x^2+y^2)\times \cdots \times (t_k^2\cdot x^2+y^2)=\prod_{j=0}^k (t_i^2\cdot x^2+y^2).
$$
Hence, for distinct $k$-tuples of $\{t_j\}$, $j=1,\dots,k$, we get non-isomorphic non-singular two-step nilpotent Lie algebra $\mathfrak{n}_{\,t_1,t_2,\dots,t_k}$,
see \cite{Sc67}.

Now, we note that the normalizer of $\mathcal{V}_{t_1,t_2,\dots,t_k}$ in $\mathfrak{so}(4k+4)$ contains
a subalgebra (that is the centralizer of $\mathcal{V}_{t_1,t_2,\dots,t_k}$ in $\mathfrak{so}(4k+4)$)
$$
\diag (D,D,\dots, D)\subset \mathfrak{so}(4k+4),
$$
where
$$
D=D(\alpha_1,\alpha_2,\alpha_3)=\left(
\begin{array}{cccc}
0 & -\alpha_1 & -\alpha_2& -\alpha_3\\
\alpha_1 & 0&  \alpha_3& -\alpha_2\\
\alpha_2& -\alpha_3& 0& \alpha_1\\
\alpha_3&\alpha_2&-\alpha_1&0\\
\end{array}%
\right), \quad \alpha_1, \alpha_2, \alpha_3 \in \mathbb{R}.
$$

Taking any $X\in \mathbb{R}^{4k+4}$, that we can represent it as $X=(X_0, X_1,\dots, X_k)$, where $X_j \in \mathbb{R}^4$, $j=0,1,2,\dots,k$.
Now, take any $Y=Y^{x,y} \in \mathcal{V}_{t_1,t_2,\dots,t_k}$ (we should fix $x$ and $y$ for this goal).
We consider also $C_j=C^{x,y}_j$ with the same fixed  $x$ and $y$ for all $j=0,1,\dots,k$,
i.~e. $Y^{x,y}=\diag(C^{x,y}_0,C^{x,y}_1,C^{x,y}_2,\cdots,C^{x,y}_k)$.

Further, for any $0\leq j\leq k$, we can find $\alpha_1^j,\alpha_2^j,\alpha_3^j$ such that
$D^j:=D(\alpha_1^j,\alpha_2^j,\alpha_3^j) (X_j)= C^{x,y}_j(X_j)$.
Therefore, $D^{X,x,y}:=\diag(D^0,D^1,\cdots,D^k)\in \mathcal{Z}$ is such that $D^{X,x,y}(X)=Y^{x,y}(X)$.

According to Remark \ref{re_sub_1}, we can take $\mathcal{Z}$ as $\mathcal{N}'\subset \mathcal{N}$ in order to apply Proposition
\ref{gonil2}.
Since $[\mathcal{Z},\mathcal{V}_{t_1,t_2,\dots,t_k}]=0$ then any inner product on $\mathcal{V}_{t_1,t_2,\dots,t_k}$ is $\ad(\mathcal{Z})$-invariant.
Hence, there is a 3-parameter family of suitable inner product on $\mathcal{V}_{t_1,t_2,\dots,t_k}$ (since $\dim (\mathcal{V}_{t_1,t_2,\dots,t_k})=2$).
Therefore, the theorem is completely proved.
\end{proof}
\bigskip

Note that  GO-nilmanifolds, constructed in the proofs of Theorems \ref{tm_1} and \ref{tm_2}, are non-singular and the Lie algebra $\mathcal{Z}\subset \mathcal{N}$
does not acts on $\mathfrak{v}=\mathbb{R}^{n}=\mathfrak{z}^{\perp}$ irreducibly. All irreducible submodules in $\mathfrak{v}$
are $4$-dimensional. This allows to define some GO-nilmanifolds of smaller dimensions in the spirit of \cite[Proposition 3.2]{GorNik2018}.
On the other hand, we get no new example of GO-nilmanifolds in this direction.

\vspace{5mm}

\bibliographystyle{amsunsrt}

\vspace{10mm}

\end{document}